\newenvironment{proof}[1][Proof]{\begin{trivlist}
\item[\hskip \labelsep {\bfseries #1}]}{$\blacksquare$ \end{trivlist}}
\newtheorem{df}{Definition}[section]
\newtheorem{ex}{Example}[section]
  \newtheorem{theorem}{Theorem}[section]
\newtheorem{remark}{Remark}
  \newtheorem{lemma}[theorem]{Lemma}
  \newtheorem{corollary}[theorem]{Corollary}
\newcommand{\qed}{\nobreak \ifvmode \relax
 \else \ifdim\lastskip<1.5em \hskip-\lastskip
\hskip1.5em plus0em minus0.5em \fi \nobreak
 \vrule height0.75em width0.5em depth0.25em\fi}
\begin{document}


\title{\bf The Intersection Spectrum of Skolem Sequences and its Applications to \mbox {\boldmath ${\lambda}$}-Fold Cyclic Triple Systems}
\author{
 Nabil Shalaby\hspace{2in} Daniela Silvesan \\
 Department of Mathematics and Statistics \\
 Memorial University of Newfoundland \\
 St. John's, Newfoundland \\
 CANADA A1C 5S7}
 \maketitle

\begin{abstract}
A Skolem sequence of order $n$ is a sequence
$S_n=(s_{1},s_{2},\ldots,s_{2n})$ of $2n$ integers containing each
of the integers $1,2,\ldots,n$ exactly twice, such that two
occurrences of the integer $j \in \{1,2,\ldots,n\}$ are separated by
exactly $j-1$ integers. We prove that the necessary conditions are sufficient for
existence of two Skolem sequences of order $n$ with
$0,1,2,\ldots,n-3$ and $n$ pairs in same positions. Further, we
apply this result to the fine structure of  cyclic two, three and four-fold triple systems, and also to the fine structure of  $\lambda$-fold
directed triple systems and  $\lambda$-fold Mendelsohn triple
systems.
\end{abstract}

{\bf {Keywords:}} Skolem and Langford sequences; combinatorial designs; cyclic triple systems; Mendelsohn triple systems.


\section{Introduction}

A {\it $\lambda$-fold triple system} of order $v$, denoted by TS$(v,\lambda)$, is a pair $(V,\cal{B})$ where $V$ is a $v$-set of points and $\cal{B}$ is a set of $3$-subsets ({\it blocks}) such that any $2$-subset of $V$ appears in precisely $\lambda$ blocks. If $\lambda=1$, the system is called a {\it Steiner triple system}, denoted by STS$(\lambda)$ is a permutation on $V$ leaving $\cal{B}$ invariant.  A TS$(v,\lambda)$ is {\it cyclic} if its automorphism group contains a $v$-cycle. A cyclic TS$(v,\lambda)$ is denoted by CTS$(v,\lambda)$. If $\lambda=1$, the system is called a {\it cyclic Steiner triple system} and is denoted by CSTS$(v)$.

Several researchers investigated cyclic designs and many results have been found. For more details the reader is directed to \cite{crc,colbournrosa}. In \cite{skolem}, Skolem introduced the idea of what is now known as a Skolem sequence of
order $n$, for $n\equiv0,1\ (\textup{mod}\ 4)$. In \cite{okeefe}, O'Keefe extended this idea to that of the hooked Skolem
sequence of order $n$, for $n\equiv2,3\ (\textup{mod}\ 4)$, the existence of which for all admissible $n$, along with
that of Skolem sequences, would lead to the
constructions of CSTS$(6n+1)$. Some authors used Skolem sequences and their generalizations to construct various cyclic designs, see for example \cite{bilington}. One reason for the interest in cyclic designs and designs with repeated blocks is their applications in statistics, see for example \cite{foody}.

In \cite{lindnerrosa}, Lindner and Rosa
determined the number of repeated triples in a TS$(v,2)$ for
$v\equiv1,3\ (\textup{mod}\ 6)$. In \cite{hoffman}, Rosa and Hoffman extended this
determination to the case $v\equiv0,4\ (\textup{mod}\ 6)$. In
\cite{wallis}, Lindner and Wallis determined the number of repeated directed triples in
a DTS$(v,2)$. Necessary conditions for $(c_1,c_2,c_3)$ ($c_i$ is the number of blocks repeated $i$ times in a system) to be the
fine structure of a threefold triple system were shown to be
sufficient for $v\equiv1,3\ (\textup{mod}\ 6)$, $v\geq 19$ in
\cite{colbournmathonrosa} and for $v\equiv5\ (\textup{mod}\ 6)$, $v\geq 17$ in \cite{colbournmathonshalaby}. Also the fine structure of a threefold
directed triple system was determined in \cite{milici}. 

In this paper, we find similar results for cyclic designs. We use Skolem sequences and the existing constructions of Langford sequences found in \cite{bermond,linekmor,simpson}. We also introduce new constructions that enable us to construct, for all admissible orders, two Skolem sequences of order $n$ that have $0,1,\ldots,n-3$ and $n$ pairs in common (two sequences have $m$ pairs in common if they have $m$ pairs in the same positions). Using these results, we determine, with a few possible exceptions, the fine structure of a cyclic two-fold triple system for $v\equiv1,3\ (\textup{mod}\ 6)$, and the fine structure of both a
cyclic three-fold and a cyclic four-fold triple system, for $v\equiv1,7\ (\textup{mod}\ 24)$. Then, we
extend these results to the fine structure of a $\lambda$-fold
directed triple system and a $\lambda$-fold Mendelsohn triple
system for $\lambda=2,3,4$. There are
a few possible exceptions in the fine structure which cannot be solved
using our technique. These possible exceptions can probably be solved
using other techniques.

\section{Basic Definitions and Known Results}

In this section we provide the basic definitions and known results necessary for the further proven results.

\begin{df}{\label{defskolem}}
A (hooked) Skolem sequence of order $n$ is a sequence $S_n=(s_{i})$ of $2n$ [2n+1] integers which satisfies the conditions:
\begin{enumerate}
\item for every $k\in \{1,2,\ldots,n\}$ there are exactly two elements $s_{i},s_{j}\in S$ such that $s_{i}=s_{j}=k$,
\item if $s_{i}=s_{j}=k,\;i<j$, then $j-i=k$,
\item in a hooked sequence $s_{2n}=0$.
\end{enumerate}

A (Hooked) Skolem sequences is also written as collections of ordered pairs $\{(a_i,b_i):1\leq i\leq n,\;b_i-a_i=i\}$ with $\cup_{i=1}^{n}\{a_i,b_i\}=\{1,2,\ldots,2n\}$. Two sequences of order $n$ are disjoint if no element $k$ occupies the same two locations in both sequences. We denote a sequence that has no pairs in common with $S_n$ by $d(S_n)$. Two sequences have $m$ pairs in common if $m$ distinct entries occur in the same positions in the two sequences. We denote different Skolem sequences of order $n$ with $0,1,\ldots,n-3,n$ pairs in common by $sp(S_n)$.
\end{df}

\begin{ex}
The two Skolem sequences of order $4$, $S_4=(1,1,3,4,2,3,2,4)$ and $S_4'=(2,3,2,4,3,1,1,4)$ have one pair in common.
\end{ex}

\begin{df}
Given a Skolem sequence $S_n=(s_{1},s_{2},\ldots,s_{2n})$, the {\it reverse} $\stackrel{\leftarrow}{S_n}=(s_{2n},\ldots,s_1)$ is also a Skolem sequence.
\end{df}

\begin{theorem} \cite{skolem} A Skolem sequence of order $n$ exists if and only if
$n\equiv0,1\ \mathrm{(mod\ 4)}$.
\end{theorem}
\begin{theorem} \cite{okeefe} A hooked Skolem sequence of order $n$ exists if and only if
$n\equiv2,3\ \mathrm{(mod\ 4)}$.
\end{theorem}

\begin{df}{\label{defnear}}
Let $m,n$ be positive integers, with $m\leq n$. A (hooked) near-Skolem sequence of order $n$ and defect $m$, is a sequence $m$-near $S_n=(s_{i})$ of $2n-2$ $(2n-1)$ integers $s_{i}\in \{1,2,\ldots, m-1,m+1,\ldots,n\}$ which satisfies the following conditions:
\begin{enumerate}
\item for every $k\in \{1,2,\ldots,m-1,m+1,\ldots,n\}$, there are exactly two elements $s_{i},s_{j}\in S$ such that $s_{i}=s_{j}=k$,
\item if $s_{i}=s_{j}=k$, then $j-i=k$,
\item in a hooked sequence $s_{2n-2}=0$.
\end{enumerate}
\end{df}

\begin{theorem} \cite{shalaby1} An $m$-near Skolem sequence of order $n$ exists if and only if $n\equiv 0,1\ \mathrm{(mod\ 4)}$ and $m$ is odd, or $n\equiv 2,3\ \mathrm{(mod \ 4)}$ and $m$ is even.
\end{theorem}
\begin{theorem} \cite{shalaby1} A hooked $m$-near Skolem sequence of order $n$ exists if and only if $n\equiv0,1\ \mathrm{(mod\ 4)}$ and $m$ is even, or $n\equiv2,3\ \mathrm{(mod\ 4)}$ and $m$ is odd.
\end{theorem}

\begin{df}{\label{deflangford}}
A (hooked) Langford sequence of order $n$ and defect $d$, $n>d$ is a sequence $L_d^n=(l_{i})$ of $2n$ $(2n+1)$ integers which satisfies:
\begin{enumerate}
\item for every $k\in \{d,d+1,\ldots,d+n-1\}$, there exist exactly two elements $l_{i},l_{j}\in L$ such that $l_{i}=l_{j}=k$,
\item if $l_{i}=l_{j}=k$ with $i<j$, then $j-i=k$,
\item in a hooked sequence $l_{2n}=0$.
\end{enumerate}
\end{df}

Note that some authors use the term length instead of the order for the Langford sequence. For some authors the order of a Langford sequence is $n+d-1$.

\begin{theorem} \cite{bermond, linekmor, simpson} The necessary and sufficient conditions for the existence of a Langford sequence are:
\begin{enumerate}
\item $n\geq 2d-1$, and
\item $n\equiv 0,1\ \mathrm{(mod\ 4)}$ for $d$ odd, $n\equiv 2,3\ \mathrm{(mod\ 4)}$ for $d$ even.
\end{enumerate}
\end{theorem}

\begin{theorem} \cite{bermond, linekmor, simpson} The necessary and sufficient conditions for the existence of a hooked Langford sequence $(d,d+1,\ldots,d+n-1)$ are:
\begin{enumerate}
\item $n(n+1-2d)+2\geq 2$, and
\item $n\equiv 2,3\ \mathrm{(mod\ 4)}$ for $d$ odd, $n\equiv 0,1\ \mathrm{(mod\ 4)}$ for $d$ even.
\end{enumerate}
\end{theorem}

\begin{df}
A $k$-extended Skolem sequence of order $n$ is sequence $k-ext\,S_n=(s_1,s_2,...,s_{2n+1})$ in which $s_k=0$ and for each $j\in\{1,2,...n\}$ there exists a unique $i\in\{1,2,...,n\}$ such that $s_i=s_{i+j}=j$.
\end{df}

\begin{theorem} \cite{baker} The necessary and sufficient conditions for the existence of a $k$-extended Skolem sequence are $n\equiv0,1\ (\textup{mod}\ 4)$ for $k$ odd, and $n\equiv2,3\ (\textup{mod}\ 4)$ for $k$ even.

\end{theorem}

\begin{df}
A $(p,q)$-extended Rosa sequence of order $n$ is a sequence $R_n(p,q)=(r_1,\ldots,r_{2n+2})$ of $2n+2$ integers $(0\leq p,q\leq 2n+2)$ containing each of the symbols $0,1,\ldots,n$ exactly twice, such that two occurrences of the integer $j>0$ are separated by exactly $j-1$ symbols and $r_p=0, r_q=0$.
\end{df}

\begin{theorem} \cite{linekshalaby} With the exception of $R_1(2,3)\,\textup{and}\, R_4(5,6)$, a $R_n(p,q)$ exists if and only if
\begin{enumerate}
\item $p\not\equiv q\ (\textup{mod}\ 2)\,\textup{and}\, n\equiv0,1\ (\textup{mod}\ 4), \,\textup{or if}$
\item $p\equiv q\ (\textup{mod}\ 2)\,\textup{and}\, n\equiv2,3\ (\textup{mod}\ 4)$
\end{enumerate}

\end{theorem}

\begin{df}
A k-extended Langford sequence of defect $d$ and order $n$ is a partition of $[1,2n+1]-\{k\}$ into differences $[d,d+n-1]$. A hooked k-extended Langford sequence of defect $d$ and order $n$ is a partition of $[1,2n+1]-\{2,k\}$ into differences $[d,d+n-1]$.
\end{df}

\begin{theorem} \cite{linekmor} If $n\geq 2d-1$ and $n\not\in [2d+2,8d-5]$, then the set $[1,2n+1]-\{k\}$ can be partition into differences $[d,d+n-1]$ whenever $(n,k)\equiv (0,1),(1,d),(2,0),(3,d+1)\ \mathrm{(mod\ (4,2))}$.
\end{theorem}

\begin{theorem} \cite{crc} A STS$(v)$ exists if and only if $v\equiv 1,3\ \mathrm{(mod\ 6)}$.
\end{theorem}

\begin{theorem} \cite{crc} A CSTS$(v)$ exists whenever $v\equiv 1,3\ \mathrm{(mod\ 6)}$ and $v\neq 9$.
\end{theorem}

Skolem and hooked Skolem sequences of order $n$ give rise to cyclic Steiner triple systems of order $6n+1$. From a Skolem sequence or a hooked Skolem sequence of order $n$, we construct the pairs $(a_i,b_i)$ such that $b_i-a_i=i$ for $1\leq i\leq n$. The set of all triples $(i,a_i+n,b_i+n)$ for $1\leq i\leq n$, is a solution to the Heffter first difference problem. These triples yield the base blocks for a CSTS$(6n+1)$: $\{0,a_i+n,b_i+n\}$, $1\leq i\leq n$. Also,  $\{0,i,b_i+n\}$, $1\leq i\leq n$ is another set of base blocks of an CSTS$(6n+1)$.

\begin{ex}
$S_4=(1,1,4,2,3,2,4,3)$ yields the pairs $\{(1,2),(4,6),(5,8),\linebreak (3,7)\}$. These pairs yield the base blocks for two CSTS$(25)s$:
\begin{enumerate}
\item $\{0,5,6\},\{0,8,10\},\{0,9,12\},$ and $\{0,7,11\}\ (\textup{mod}\;25)$;
\item $\{0,1,6\},\{0,2,10\},\{0,3,12\},$ and $\{0,4,11\}\ (\textup{mod}\;25)$.
\end{enumerate}
\end{ex}

\begin{df}
A $\lambda$-fold directed triple system (Mendelsohn triple system), denoted by DTS$(v,\lambda)$ (respectively by MTS$(v,\lambda)$), is a pair $(V,B)$ where $V$ is a $v$-set while $B$ is a collection of ordered $3$-subsets of $V$ (directed cycles of length $3$), such that each ordered pair of distinct elements of $V$ is contained in exactly $\lambda$ triples.
\end{df}

\begin{ex}
 $\{[0,2,1],[1,3,2],[2,0,3],[3,1,0]\}$ is a DTS$(4,1)$ and $\{\langle0,1,2\rangle,\linebreak \langle2,1,0\rangle\}$ is a MTS$(3,1)$.
\end{ex}

 Note that each cyclic triplet $[a,b,c]$ in a directed system contains the ordered pairs $(a,b)$, $(a,c)$ and $(b,c)$, and that each cyclic triplet $\langle a,b,c\rangle$ in a Mendelsohn system
contains the ordered pairs $(a,b)$, $(b,c)$ and $(c,a)$.

\section{Langford Sequences and Their Reverses}

In this section we check the known Langford sequences constructions if they are reverse disjoint or have pairs in common.

The existence of two disjoint Langford sequences for all admissible orders is still an open question. However, we noticed that the reverses of the known constructions \cite{bermond,linekmor,simpson} yield two disjoint Langford sequences with some finite number of exceptions. We arrange the results we found in Table \ref{n=4t}. On the last column of the table we give complete reference for the construction (reference, theorem number or table number, row number). To see the sequences used to get the results in Table \ref{n=4t}, the reader has to consult \cite{bermond,linekmor,simpson}. In \cite{linekmor}, Table $1d$ take $\delta=0$ and do not take the alternative in line (5) of the table. Since \cite{bermond} is difficult to obtain we describe the constructions from it in Appendix C of the Supplement.

\begin{table}[!h]
\begin{center}
\begin{tabular}{|c|c|c|c|c|}
\hline
 & n & d & Pairs in common & Reference\\
\hline
1 & $4t$ & $4s$  & $1 \;\textup{if}\; s\equiv1\ \mathrm{(mod\;3)}$ &  \cite{simpson}, $1(11)$\\
 && $s\geq1$ & $1 \;\textup{if}\; s\equiv2\ \mathrm{(mod\;3)}$ & \cite{simpson}, $1(12)$\\
&&$t\geq 2s$& $0 \;\textup{if}\; s\equiv 0\ \mathrm{(mod\;3)}$ & \cite{simpson}, $1$\\
 & & $4s+1$  & $0\;\textup{if}\;s\equiv 1,2\ \mathrm{(mod\;3)}$ & \cite{simpson}, $1$\\
&&$s\geq1$& $2\;\textup{if}\;s\equiv 0\ \mathrm{(mod\;3)}$ & \cite{simpson}, $1(11),(12)$\\
&& $t\geq2s+1$&&\\
 &  & $4s+2$  & $1\;\textup{if}\;s\equiv 0,2\ \mathrm{(mod\;3)}$ & \cite{simpson}, $1(14)$\\
&&$s\geq1$& $3\;\textup{if}\;s\equiv1\ \mathrm{(mod\;3)}$ & \cite{simpson}, $1(11),(12),(14)$\\
&& $t\geq2s+1$&&\\
 &  & $4s-1$  & $0\;\textup{if}\;s=1\; \textup{or}\;2$ & \cite{simpson}, $1$\\
&&$s\geq1$& $s-2\;\textup{if}\;s\equiv2\ \mathrm{(mod\;3)}$ & \cite{simpson}, $1(8)$\\
&&$t\geq2s$&$s\geq 5$&\\
&&& $s-1\;\textup{if}\;s\equiv 0,1\ \mathrm{(mod\;3)}$ & \cite{simpson}, $1(8),(12)$\\
&&&$s\geq 3$&\\
\hline
2 & $4t$ & $2t-e$   & $2 \;\textup{if}\; e\equiv 1\ \mathrm{(mod\;3)}$ & \cite{bermond}, $3(3),(4)$\\
&&$t\geq2e+1$& $0$ if $e\equiv 0,2\ \mathrm{(mod\;3)}$&\\
\hline
3 & $2d-1$  & $d\geq 2$ & $0$ &\cite{bermond}, $2$\\
&$\mathrm{(mod\;4)}$ & $d$ even & &\\
 & & $d\geq3$  & $0$ if $ n\neq 2d-1$ &\cite{bermond}, $2$\\
&& $d$ odd & $1$ if $n=2d-1$ & \cite{bermond}, $2(4)$\\
\hline
4 & $2d$ & $0\ \mathrm{(mod\;6)}$  & $1$ & \cite{linekmor} $1d,(4)$\\
& & $2\ \mathrm{(mod\;6)}$  & $1$ & \cite{linekmor} $1d,(3)$\\
 & & $4 \ \mathrm{(mod\;6)}$  & $0$& \cite{linekmor} $1d$\\
\hline
5 & $2d-1$ & $0\ \mathrm{(mod\;3)}$ & $0,1$ if $d=3$ & \cite{linekmor}, $0a$ and  \cite{bermond}, $2$\\
&&& $0,1,2$ if $d\geq 6$ & \cite{linekmor}, $0a$ and  \cite{bermond}, $2$\\
& & $1\ \mathrm{(mod\;3)}$ & $0,1,2$ if $d\geq 4$ & \cite{linekmor}, $0a$ and  \cite{bermond}, $2$ \\
& & $2\ \mathrm{(mod\;3)}$ & $0$ if $d=2$ & \cite{linekmor}, $0a$ and  \cite{bermond}, $2$ \\
& &  & $0,1,3$ if $d=5$ & \cite{linekmor}, $0a$ and  \cite{bermond}, $2$ \\
& &  & $0,1,2,3$ if $d\geq 8$ & \cite{linekmor}, $0a$ and  \cite{bermond}, $2$ \\

\hline
\end{tabular}
\caption {The number of pairs in common between Langford sequences and their reverses} \label{n=4t}
\end{center}
\end{table}

Below we describe how to find the number of pairs in common between the Langford sequences found in \cite{simpson} and their reverses.

Consider $n=4t,\;d=4s$, with $s\geq1$ and $t\geq2s$. Note that the pair $(a_i,b_i)$ will appear in a sequence and its reverse if and only if $a_i+b_i=2n+1=8t+1$. This does not happen in the rows labeled $(1)-(10)$ and $(13)$ of Simpson's table.

In row $(11)$, $(2t-3s+2+j)+(6t-s+3+2j)=8t-4s+5+3j=8t+1\Leftrightarrow j=\frac{4s-4}{3}\in\mathbb{Z}$ so the pair is in the sequence and its reverse if and only if $s\equiv 1\ \mathrm{(mod\;3)}$.

In row $(12)$, $(2t+1+j)+(6t-s+2+2j)=8t-s+3+3j=8t+1\Leftrightarrow j=\frac{s-2}{3}\in \mathbb{Z}$ so the pair is in the sequence and its reverse if and only if $s\equiv 2\ \mathrm{(mod\;3)}$.

If $s\equiv 0\ \mathrm{(mod\;3)}$, no pair occurs in the sequence and its reverse.

Similarly, we check the number of pairs in common between sequences 2 and 3 and their reverses. For sequence 4, the pair $(a_i,b_i)$ will appear in a sequence and its reverse if and only if $a_i+b_i=2n+3$.

Table \ref{n=4t}, sequence 5 gives the number of pairs in common between six different Langford sequences. Here we deal with a special Langford sequence, $L_d^{2d-1}$. The Langford sequence $L_d^{2d-1}$ \cite{linekmor}, Table $0a$, can be modified by moving the pair $n=2d-1$ from the end of the sequence to the beginning of it. For example, $L_{3}^{5}=(6,7,3,4,5,3,6,4,7,5)$ becomes $L_{3}^{5}=(5,6,7,3,4,5,3,6,4,7)$. We call the second sequence, the {\it modified Langford sequence}.

Taking the Langford sequence from \cite{linekmor}, Table $0a$ and the modified Langford sequence we have two different Langford sequences of order $2d-1$ that are disjoint. This is obvious since every position in the modified Langford sequence is shifted by one position to the right.

Taking the Langford sequence from
\cite{linekmor}, Table $0a$ and the Langford sequence from \cite{bermond}, Theorem $2$ we get two Langford sequences of order $2d-1$ which are disjoint if $d\equiv 1\ \mathrm{(mod\;3)}$, and have one pair in common if $d\equiv 0,2\ \mathrm{(mod\;3)}$. This can be seen by checking every position from the first sequence with the corresponding position on the second sequence. We give below the general constructions for these sequences.

The general construction for $L_d^{n=2d-1=4t+3}$ with $d>2$ in \cite{bermond} (Theorem 2, $d$ even) is:

\begin{center}
$3d-3,3d-5,\ldots,2d+1,2d-2,2d-4,\ldots,d+2,2d,3d-2,3d-4,\ldots,2d+2,d,2d-1,2d-3,\ldots,d+1,d+2,d+4,\ldots,2d-2,d,2d+1,2d+3,\ldots,3d-3,2d,d+1,d+3,\ldots,2d-1,2d+2,2d+4,\ldots,3d-2$.
\end{center}

The general construction for $L_d^{2d-1}$ in \cite{bermond} (Theorem 2, $d$ odd) is:

\begin{center}
$3d-3,\ldots,2d+2,2d-1,\ldots,d+2,2d,3d-2,\ldots,2d+1,d,2d-2,\ldots,d+1,d+2,\ldots,2d-1,d,2d+2,\ldots,3d-3,2d,d+1,\ldots,2d-2,2d+1,\ldots,3d-2.$
\end{center}

The general construction for the Langford sequence from \cite{linekmor}, Table $0a$ is:

\begin{center}
$2d,2d+1,\ldots,3d-2,d,d+1,d+2,\ldots,2d-1,d,2d,d+1,2d+1,\ldots,2d-1$.
\end{center}

Taking the modified Langford sequence and the Langford sequence from \cite{bermond}, we get one pair in common if $d=5$ or $d\equiv\ 1(\textup{mod}\;3),d\neq 4$ and two pairs in common if $d=4$ or $d\equiv\;0,2\ (\textup{mod}\;3),d\neq 5$.

Taking the modified Langford sequence and the reverse of the Langford sequence from \cite{bermond}, we get one pair in common if $d=3$ or $d=4$, no pairs in common if $d\equiv\;0,2\ (\textup{mod}\;3),d\neq 3$ and two pairs in common if $d\equiv \;1\ (\textup{mod}\;3),d\neq 4$.

Taking the modified Langford sequence and the reverse of the Langford sequence from \cite{linekmor}, we get three pairs in common for $d\equiv\;2\ (\textup{mod}\;3),d\geq 5$.

\section{New constructions}

In this section, we present two new constructions that manipulate Skolem and Langford sequences. 

\subsection{First Main Construction}
We construct new Skolem sequences by adjoining a (hooked) Skolem sequence to a (hooked) Langford sequence. In order for a Langford sequence to exist it must be twice as long as a Skolem sequence. For this reason we divide the problem of finding pairs in common in three major cases:

{\bf Case 1. The number of pairs in the interval  \mbox {\boldmath ${[0,\lfloor\frac{n}{3}\rfloor]}$}}. To show that two Skolem sequences intersect in $[0,\lfloor\frac{n}{3}\rfloor]$ pairs, we adjoin a Skolem sequence to a Langford sequence, i.e., $S_d L^{n-d}_{d+1}=S_{n}$. Then we reverse the Langford sequence and keep the Skolem sequence in its normal position, i.e., $S_d \stackrel{\leftarrow}{L^{n-d}_{d+1}}=S_{n}$. We get two different Skolem sequences of order $n$ with some pairs in common. These pairs will be in the interval $[0,\lfloor\frac{n}{3}\rfloor]$.

{\bf Case 2. The number of pairs in the interval  \mbox {\boldmath ${(2\lfloor\frac{n}{3}\rfloor,n]}$}}. To show that two Skolem sequences of order $n$ intersect in $[2\lfloor\frac{n}{3}\rfloor,n]$ pairs, we keep the Langford sequences in their normal positions and reverse the Skolem sequences: $S_d L_{d+1}^{n-d}=S_{n}$ and $\stackrel{\leftarrow}{S_d} L_{d+1}^{n-d}=S_{n}$. We obtain two $S_{n}$ with some pairs in common. These pairs will be in the interval $[2\lfloor\frac{n}{3}\rfloor,n]$.

{\bf Case 3. The number of pairs in the interval  \mbox {\boldmath ${(\lfloor\frac{n}{3}\rfloor, 2\lfloor\frac{n}{3}\rfloor]}$}}. To find these pairs we need a different construction which is outlined in the next section.

\subsection{Second Main Construction}

We construct new Skolem sequences using three different strings:

{\bf String \mbox {\boldmath ${A_t^n}$}:} this string (or shell) is a sequence formed by even and odd numbers starting with $n$ and some free spaces (holes) in the middle of the sequence. Let $t$ be the order of $A$ (i.e., the number of pairs in $A$). The sequence is:
\begin{center}
$n,n-2,\ldots,n-t+1, n-1, n-3,\ldots,n-t+2,\underbrace{--------} _{\mbox{$n-t-\lfloor\frac{t}{2}\rfloor$ free spaces}},n-t+1,\ldots,n-2,n,n-t+2,\ldots,n-3,n-1$.
\end{center}

For $n=12$ and $t=7$, the sequence is:
\begin{center}
$A_7^{12}=12,10,8,6,11,9,7,\underbrace{--}_{\mbox{2 spaces}},6,8,10,12,7,9,11$.
\end{center}

{\bf String \mbox {\boldmath ${B}$}:} this string is the space inside string $A$. We try to fit in this hole a (hooked) Skolem sequence, a $k$-extended Skolem sequence, a $2$-near Skolem sequence or a Langford sequence. In the example above, we can fit a Skolem sequence of order $1$: $S_{1}=(1,1)$.

{\bf String \mbox {\boldmath ${C}$}:} in this string we form a Langford or hooked Langford sequence from the elements left from strings $A$ and $B$.

For $n=12$ and $t=7$, the elements left from string $A_7^{12}$
and $B$ are $2,3,4,5$. So, we form a Langford sequence of
defect $2$ and order $4$: $L_{2}^{4}=(5,2,4,2,3,5,4,3).$

The Skolem sequence of order $12$ formed by this construction is:
\begin{center} $S_{12}=(\underbrace{12,10,8,6,11,9,7}_{\mbox{$A_7^{12}$}},\underbrace{1,1}_{\mbox{B}},\underbrace{6,8,10,12,7,9,11}_{\mbox{$A_7^{12}$}},\underbrace{5,2,4,2,3,5,4,3}_{\mbox{C}})$.
\end{center}
We form other Skolem sequences of order $12$ by reversing Strings $A_7^{12}$, $B$ or $C$:
\begin{enumerate}
\item keep strings $A_7^{12}$ and $B$ in their normal positions and reverse string $C$. This Skolem sequence of order $12$ with the initial Skolem sequence of order $12$ will have eight pairs in common, that is, seven pairs from string $A_7^{12}$ and one pair from string $B$.
\begin{center}
$(\underbrace{12,10,8,6,11,9,7}_{\mbox{$A_7^{12}$}},\underbrace{1,1}_{\mbox{B}},\underbrace{6,8,10,12,7,9,11}_{\mbox{$A_7^{12}$}},\underbrace{3,4,5,3,2,4,2,5}_{\mbox{$\stackrel{\leftarrow}{C}$}})$
\end{center}
\item keep strings $B$ and $C$ in their normal positions and reverse string $A_7^{12}$. This Skolem sequences of order $12$ with the initial Skolem sequence of order $12$ will have five pairs in common, that is, four pairs from string $C$ and one pair from string $B$.
\begin{center}
$(\underbrace{{11,9,7,12,10,8,6}}_{\mbox{$\stackrel{\leftarrow}{A_7^{12}}$}},\underbrace{1,1}_{\mbox{B}},\underbrace{{7,9,11,6,8,10,12}}_{\mbox{$\stackrel{\leftarrow}{A_7^{12}}$}},\underbrace{5,2,4,2,3,5,4,3}_{\mbox{C}})$.
\end{center}
\end{enumerate}

When we reverse string $A_t^n$, the odd numbers take place of the even numbers and vice versa. Therefore, $A_t^n$ and $\stackrel{\leftarrow}{A_t^n}$ are always disjoint.

Here is another way to construct a Skolem sequence of order $n$ using the second main construction. We denote a hooked Skolem sequence of order $n$ that has $s$ other pairs around it by $hS_{n} + s\; \textup{pairs}$.

\begin{ex}
For $n=16$ and $t=7$, we have a hole of six spaces between the two regions that form string $A_7^{16}$ of the construction. We fill this hole with a hooked Skolem sequences of order $2$ and two other pairs, $8$ and $9$ (these pairs are the largest pairs left from string $A_7^{16}$). Then we add a Langford sequence of order $5$ and defect $3$ at the end of the sequence. The new sequence is:
\begin{center}
$\underbrace{9}_{\mbox{B}},\underbrace{16,14,12,10,15,13,11}_{\mbox{$A_7^{16}$}},\underbrace{2,9,2,1,1,8}_{\mbox {B}},\underbrace{10,12,14,16,11,13,15}_{\mbox{$A_7^{16}$}},\underbrace{8}_{\mbox{B}},\linebreak \underbrace{5,6,7,3,4,5,3,6,4,7}_{\mbox {C}}$.
\end{center}
In this case the sequence can be written on short $A_7^{16}$, $B=hS_2+2\;\textup{pairs}, C=L_3^5$.
\end{ex}

We denote the number of pairs found in sequence $B$ by $ord(B)$. In the example above, $ord(B)=4$.

\section{The Intersection Spectrum of Skolem Sequences}

For $1\leq n\leq 9$ and $n\neq 5$, two (hooked) Skolem sequences intersect in $0,1,\ldots,n-3,n$ pairs (see Appendix B of the Supplement). Two Skolem sequences of order $5$ intersect in  $0,1,5$ pairs. Using the intersection spectrum of two (hooked) Skolem sequences of order $n$ for
 $1\leq n\leq 9$, we determine in Theorem \ref{main1} the intersection spectrum of two Skolem sequences of order $10\leq n\leq 29$. Then, using the intersection spectrum of two Skolem sequences of order $n$ for
 $1\leq n\leq 29$, we determine the intersection spectrum of two Skolem sequences of order $30\leq n\leq 89$. Continue the same procedure, we determine the intersection spectrum of two Skolem sequences of order $n$ for any $n$.

Since we want to find the intersection spectrum of two Skolem sequences for a specific value of $n$, we use the proof of the Theorem \ref{main1} backwards, i.e., to find the intersection spectrum of two Skolem sequences of order $n$ we use the intersection spectrum of two Skolem sequences of order $n_1$ where $n_1<\lfloor\frac{n}{3}\rfloor$. If $n_1\geq 10$, we find the intersection spectrum of two Skolem sequences of order $n_1$
 by using the intersection spectrum of two Skolem sequences of order $n_2$ where $n_2<\lfloor\frac{n_1}{3}\rfloor$. We continue the same procedure until $n_i\leq 9$ for some integer $i$. At this point the intersection spectrum of two Skolem sequences of a specific order $n$
 is satisfied.

We divide the proof of the Theorem \ref{main1} into six cases, $n\equiv 0,1,4,5,8,9\linebreak \mathrm{(mod\;12)}$. Then we divide each case into three subcases depending on the position of the pairs, $[0,\lfloor\frac{n}{3}\rfloor], (\lfloor\frac{n}{3}\rfloor,2\lfloor\frac{n}{3}\rfloor], (2\lfloor\frac{n}{3}\rfloor,n]$. In each case we have three types of constructions: general constructions where we use our two main constructions, recursive constructions where we use the existing intersection spectrum of two Skolem sequences of a smaller order and special constructions where we adjoin two or three Langford sequences and manipulate them to found the pairs not covered by the other two constructions.

\begin{theorem}\label{main1}
The necessary conditions are sufficient for two Skolem sequences of
order $n$ to intersect in $\{0,1,\ldots,n-3,n\}$ pairs.
\end{theorem}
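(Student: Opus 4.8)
My plan has two parts, matching the two halves of ``the necessary conditions are sufficient.'' The necessity is almost entirely trivial — a Skolem sequence of order $n$ exists only for $n\equiv 0,1\pmod 4$, and nothing in $\{0,\ldots,n-3\}\cup\{n\}$ can be missing once explicit constructions are given — but there is one real point to record first: that $k=n-1$ and $k=n-2$ truly cannot occur. For $k=n-1$ the two unmatched positions must carry the single missing difference, so the two sequences coincide. For $k=n-2$ the four free positions $a<b<c<d$ must be matched into two pairs realizing the two missing differences; the three candidate matchings $\{(a,b),(c,d)\}$, $\{(a,c),(b,d)\}$, $\{(a,d),(b,c)\}$ have difference-sums $b+d-a-c$, $c+d-a-b$, $c+d-a-b$, and the two with equal sum are distinguished by the fact that only $\{(a,d),(b,c)\}$ uses the largest possible gap $d-a$; so the completion is forced and again the sequences coincide. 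Hence the content of the theorem is the realizability of every $k\in\{0,\ldots,n-3\}\cup\{n\}$, to which the remainder of the plan is devoted.

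For realizability I would argue by strong induction on $n$ restricted to $n\equiv 0,1\pmod 4$, the small orders being handled directly in Appendix A together with the small-order intersection examples of Appendix B. The endpoints are immediate: $k=n$ by taking two copies of one sequence, and $k=0$ by a disjoint pair of Skolem sequences of order $n$, known to exist for the admissible orders. For $0<k<n-2$ the engine is a block decomposition: a Skolem sequence of order $n$ can be written as a (possibly hooked) Skolem sequence of order $r$ on the symbols $1,\ldots,r$ concatenated with a Langford sequence of defect $r+1$ and length $n-r$ on the symbols $r+1,\ldots,n$, which is legitimate whenever the Langford parameters satisfy the existence conditions of \cite{bermond} recorded in Appendix C. Holding the Langford block fixed as a common core and filling the first block with two Skolem sequences of order $r$ meeting in $j$ pairs — supplied by the induction hypothesis, $j$ ranging over $\{0,\ldots,r-3\}\cup\{r\}$ — produces two Skolem sequences of order $n$ meeting in exactly $(n-r)+j$ pairs; letting $r$ run over the admissible orders up to about $n/3$ sweeps out every $k$ from roughly $2n/3$ up to $n-3$, as well as $k=n$. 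Symmetrically, holding the (hooked) Skolem block fixed as the common core and varying the Langford block over two Langford sequences of equal parameters meeting in a prescribed number of pairs — the intersection spectrum of Langford sequences, whose needed instances are the constructions of Appendix C together with the pair-tables of Appendix D — realizes the small values of $k$; a three-block refinement (a common Skolem block, a common Langford block, and a variable hooked-Skolem-or-Langford block on the top symbols) closes the remaining middle band.

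The partition into the six residue classes $n\equiv 0,1,4,5,8,9\pmod{12}$ — two settled in \cite{silvesan}, the other four here — is forced by exactly this last step: the defect-versus-length parity constraints on Langford sequences behave differently in each class, so the admissible choices of $r$, of which block plays the role of core, and of which auxiliary objects (Skolem, hooked Skolem, Langford, the ad hoc sequences of Appendix A, the pair-tables of Appendix D) are inserted must all be retuned class by class. The hard part is therefore not a single idea but the bookkeeping: checking, in each of the four outstanding classes, that for every target $k\in\{0,\ldots,n-3\}$ some admissible decomposition yields exactly $k$ common pairs with no gap — in particular at the top value $k=n-3$ and for the smallest $k$ — and that each switch between the two completions of a decomposition preserves the Skolem property, i.e. keeps the multiset of differences equal to $\{1,\ldots,n\}$ and the occupied positions a partition of $\{1,\ldots,2n\}$. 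Carrying out that verification for the four residues is the task of the remainder of this section.
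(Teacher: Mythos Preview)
Your necessity argument is correct, and your scheme for large $k$ --- hold the Langford block common and let the Skolem block range over its inductive spectrum, giving $k=(n-r)+j$ --- is exactly what the paper does for the top interval. The gap is in your mechanism for \emph{small} $k$. Holding the Skolem block fixed as a common core forces it to contribute $r$ common pairs, so that scheme yields only $k\ge r$ and never reaches $k=1,\ldots,r-1$. More seriously, to make it sweep any interval at all you would need the intersection spectrum of Langford sequences, and that is not what Appendices~C and~D supply: those tables record only the number of common pairs between one particular Langford construction and its \emph{reverse} (or a hooked/appended variant thereof), and that number is a single small constant --- typically $0$, $1$, $2$, or $3$ depending on the parameters --- not a range you can prescribe.

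What the paper actually does for the bottom interval $[0,\lfloor n/3\rfloor]$ is vary \emph{both} blocks at once: one sequence is $S_r^{(1)}L$ and the other is $S_r^{(2)}\overleftarrow{L}$, where the pair $(S_r^{(1)},S_r^{(2)})$ is drawn from the inductive spectrum so as to share $j$ pairs, and the Langford/reverse pair contributes only a fixed small offset $c\in\{0,1,2,3\}$ read from the tables; then $k=j+c$ sweeps the low values as $j$ ranges over $\{0,\ldots,r-3,r\}$. The middle band $(\lfloor n/3\rfloor,2\lfloor n/3\rfloor]$ is likewise more intricate than a single three-block refinement: the paper runs two interleaved iterative families of three-block decompositions $A,B,C$ (its ``Sequence~1'' and ``Sequence~2''), stepping the block sizes so that the resulting subintervals overlap, and it is this interleaving --- together with the gaps $\{r-2,r-1\}$ in every inductive call --- that forces both the residue-by-residue split mod~$12$ and the long list of sporadic small orders handled by hand in Appendix~A.
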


\begin{proof} For $n$ pairs in common we take two copies of the same sequence. For two disjoint Skolem sequences of order $n$ or for two disjoint $L_2^n$, see \cite{shalaby}. For a near-Skolem sequence, see \cite{shalaby1}. For some small cases ($12\leq n\leq 100$) our constructions did not work. These small cases are not covered by the theorem and are listed in Appendix A for the case $n\equiv 0(\textup{mod}\, 12)$ and in Appendix A of the Supplement, for the remaining of the cases.

{\bf Necessity:} Two Skolem sequences of order $n$ do not intersect in $n-1$ pairs (obvious).

Two Skolem sequences do not intersect in $n-2$ pairs. Suppose $S_1$ and $S_2$ are two Skolem sequences of order $n$ that intersect in $n-2$ pairs. There are two differences left, say $a$ and $b$, and $4$ positions, $s_1,s_2,s_3,s_4$ left. Without loss of generality we assume that $s_1<s_2<s_3<s_4$ and that $a$ is in positions $s_1$ and $s_i$ in $S_1$. In $S_2$: $a$ cannot be in position $s_1$, so $b$ is in position $s_1$; $b$ cannot be then in position $s_i$, so $a$ is in $s_i$; $b$ cannot be in $s_4$, since $s_4-s_i>b$, so $a$ is in $s_4$. Similarly, $a$ could not be in $s_4$ in $S_1$ since $s_4-s_1>a$. Therefore, $a=s_i-s_1=s_4-s_i$ and $b=s_4-s_j=s_j-s_1$ for some $j\neq 1,i,4$, so $s_i=\frac{s_4-s_1}{2}=s_j$; a contradiction.

{\bf Sufficiency:  \mbox {\boldmath ${n\equiv 0,1,4,5,8,9\ \mathrm{(mod\; 12)}}$}}

{\bf The number of pairs in the interval  \mbox {\boldmath ${(2\lfloor\frac{n}{3}\rfloor,n]}$}:} To get $n-d$ pairs in common, take: $S_d L_{d+1}^{n-d}$ and $d(S_d) L_{d+1}^{n-d}$, for $d=3,4,\ldots,\lfloor\frac{n-1}{3}\rfloor$.

To get $2\lfloor\frac{n}{3}\rfloor+1$ pairs in common for $n\equiv 5\ \mathrm{(mod \;12)}$ take: $A=A_{2\lfloor\frac{n}{3}\rfloor-9}^n,\;B=L_3^8,\;C=hL_{11}^{\lfloor\frac{n}{3}\rfloor+1}*(2,0,2),D=(1,1)$; $A,B,C,D$ and $A,B,\stackrel{\leftarrow}{C}$ ($1$ pair, see the Supplement, Table 9, sequence 3), $D$.

To get $2\lfloor\frac{n}{3}\rfloor+1$ pairs in common for $n\equiv 8\ \mathrm{(mod\; 12)}, n\geq 56$ take: $A=A_{2\lfloor\frac{n}{3}\rfloor-9}^n$, $B=L_2^8$, $C=L_{10}^{\lfloor\frac{n}{3}\rfloor+3}, D=(1,1)$; $A,B,C,D$ and $A,B,\stackrel{\leftarrow}{C}$($1$ pair from Table \ref{n=4t}, sequence 1), $D$.

To prove the sufficiency for the pairs in the intervals ${[0,\lfloor\frac{n}{3}\rfloor]}$ and ${(\lfloor\frac{n}{3}\rfloor, 2\lfloor\frac{n}{3}\rfloor]}$ we split the remaining of the proof into six different cases.

{\bf Case 1.  \mbox {\boldmath ${n\equiv 0\ \mathrm{(mod\; 12)}}$}}

{\bf The number of pairs in the interval  \mbox {\boldmath ${[0,\lfloor\frac{n}{3}\rfloor]}$}:} To get the number of pairs in the interval $[0,\lfloor\frac{n}{3}\rfloor]$ for $n=12m$ and $m\geq3$ we adjoin a Skolem sequence of order $4m-3$ to a Langford sequence $L_{4m-2}^{8m+3}$, i.e., $S_{4m-3} L_{4m-2}^{8m+3}=S_{12m}$. Then we adjoin a different Skolem sequence of order $4m-3$ to the reverse of the Langford sequence, i.e., $S_{4m-3}' \stackrel{\leftarrow}{L_{4m-2}^{8m+3}}=S_{12m}'$.

We use the existing intersection spectrum of two Skolem sequences of order $4m-3$. Two Skolem sequences of order $4m-3$ intersect in $0,1,\ldots,4m-6,4m-3$ pairs (see Appendix B of the Supplement, for $1\leq 4m-3 \leq 9$ and Cases $1,5,9\ \mathrm{(mod\;12)}$ for $4m-3\geq 12$). For all the cases we use our first construction recursively as illustrated in Table \ref{spectrum}.

\begin{table}[hp]
\begin{center}
\begin{tabular}{|c|c|c|}
\hline
Skolem sequences & Pairs in common & Cases used to get the spectrum\\
\hline
$S_{12m},m\geq3$ & $sp(S_{4m-3})$ & $1,5,9\ \mathrm{(mod\;12)}$\\
\hline
$S_{12m+1},m\geq1$ & $sp(S_{4m})$ & $0,4,8\ \mathrm{(mod\;12)}$\\
\hline
$S_{12m+4},m\geq1$ & $sp(S_{4m+1})$ & $1,5,9\ \mathrm{(mod\;12)}$\\
\hline
$S_{12m+5},m\geq1$ & $sp(S_{4m+1})$ & $1,5,9\ \mathrm{(mod\;12)}$\\
\hline
$S_{12m+8},m\geq1$ & $sp(S_{4m+1})$ & $1,5,9\ \mathrm{(mod\;12)}$\\
\hline
$S_{12m+9},m\geq1$ & $sp(S_{4m})$ & $0,4,8\ \mathrm{(mod\;12)}$\\
\hline
\end{tabular}
\caption{Recursive constructions} \label{spectrum}
\end{center}
\end{table}

To get $0,1,\ldots,\lfloor\frac{n}{3}\rfloor-6, \lfloor\frac{n}{3}\rfloor-3$ pairs in common for $n\geq 36$ take: $sp(S_{\lfloor\frac{n}{3}\rfloor-3}) L^{n-\lfloor\frac{n}{3}\rfloor+3}_{\lfloor\frac{n}{3}\rfloor-2}$ and $sp(S_{\lfloor\frac{n}{3}\rfloor-3}) \stackrel{\leftarrow}{L^{n-\lfloor\frac{n}{3}\rfloor+3}_{\lfloor\frac{n}{3}\rfloor-2}}$ (the Langford sequence and its reverse have $0$ pairs in common from Table \ref{n=4t}, sequence 3).

To get $\lfloor\frac{n}{3}\rfloor-5$ pairs in common for $n=12m$ with $m\equiv 2\ \mathrm{(mod\;3)}$ and $m\geq 5$ take: $L_2^{\frac{n}{12}-1}, L_{\frac{n}{12}+1}^{\frac{n}{4}-4},(1,1)L_{\frac{n}{3}-3}^{\frac{2n}{3}+4}$ and $d(L_2^{\frac{n}{12}-1})$,$L_{\frac{n}{12}+1}^{\frac{n}{4}-4},\stackrel{\leftarrow}{(1,1)L_{\frac{n}{3}-3}^{\frac{2n}{3}+4}}$ ($s$ pairs, see the Supplement, Table 8, sequence 2, where $s=\frac{n}{12}-1$).

To get $\lfloor\frac{n}{3}\rfloor-5$ pairs in common for $n=12m$ with $m\equiv 0,1\ \mathrm{(mod\;3)}$ take: $L_2^{\lfloor\frac{n}{3}\rfloor-5},L_{\lfloor\frac{n}{3}\rfloor-3}^{n-\lfloor\frac{n}{3}\rfloor+4}(1,1)$ and $L_2^{\lfloor\frac{n}{3}\rfloor-5},\stackrel{\leftarrow}{L_{\lfloor\frac{n}{3}\rfloor-3}^{n-\lfloor\frac{n}{3}\rfloor+4}(1,1)}$ ($0$ pairs, see the Supplement, Table 8, sequence 2).

To get $\lfloor\frac{n}{3}\rfloor-4$ pairs in common for $n=12m$ with $m\equiv 0,2\ \mathrm{(mod\;3)}$ and $m\geq 2$ take: $S_{\lfloor\frac{n}{3}\rfloor-4} L^{n-\lfloor\frac{n}{3}\rfloor+4}_{\lfloor\frac{n}{3}\rfloor-3}$ and  $S_{\lfloor\frac{n}{3}\rfloor-4} \stackrel{\leftarrow}{L^{n-\lfloor\frac{n}{3}\rfloor+4}_{\lfloor\frac{n}{3}\rfloor-3}}$ ($0$ pairs from Table \ref{n=4t}, sequence 1).

To get $\lfloor\frac{n}{3}\rfloor-4$ pairs in common for $n=12m$ with $m\equiv 1\;\mathrm{(mod\;3)}$ and $m\geq 7$ take: $S_{\lfloor\frac{n}{3}\rfloor-4} L_{\lfloor\frac{n}{3}\rfloor-3}^{n-\lfloor\frac{n}{3}\rfloor+4}$ and $S_{\lfloor\frac{n}{3}\rfloor-4} \stackrel{\leftarrow}{L_{\lfloor\frac{n}{3}\rfloor-3}^{n-\lfloor\frac{n}{3}\rfloor+4}}$ ($0$ pairs from Table \ref{n=4t}, sequence 2).

To get $\lfloor\frac{n}{3}\rfloor-2$ pairs in common for $n=12m$ with $m\equiv 0,1\;\mathrm{(mod\;3)}$ and $m\geq 3$ take: $2-\textup{near}\;S_{\lfloor\frac{n}{3}\rfloor-2} hL^{n-\lfloor\frac{n}{3}\rfloor+2}_{\lfloor\frac{n}{3}\rfloor-1}*(2,0,2)$ and  $2-\textup{near}\;S_{\lfloor\frac{n}{3}\rfloor-2} \stackrel{\leftarrow}{hL^{n-\lfloor\frac{n}{3}\rfloor+2}_{\lfloor\frac{n}{3}\rfloor-1}}*(2,0,2)$ ($1$ pair, see the Supplement, Table 9, sequence 3).

To get $\lfloor\frac{n}{3}\rfloor-2$ pairs in common for $n=12m$ with $m\equiv 2\;\mathrm{(mod\;3)}$ take: $L_3^{\lfloor\frac{n}{3}\rfloor-3}(1,1),hL_{\lfloor\frac{n}{3}\rfloor}^{2\lfloor\frac{n}{3}\rfloor+1}*(2,0,2)$ and $L_3^{\lfloor\frac{n}{3}\rfloor-3}(1,1),\stackrel{\leftarrow}{hL_{\lfloor\frac{n}{3}\rfloor}^{2\lfloor\frac{n}{3}\rfloor+1}}*(2,0,2)$ ($0$ pairs, see the Supplement, Table 9, sequence 2).

To get $\lfloor\frac{n}{3}\rfloor-1$ pairs in common for $n\geq 24$ take: $2-\textup{near}\; S_{\lfloor\frac{n}{3}\rfloor-1} hL^{n-\lfloor\frac{n}{3}\rfloor+1}_{\lfloor\frac{n}{3}\rfloor}*(2,0,2)$ and $2-\textup{near}\; S_{\lfloor\frac{n}{3}\rfloor-1} \stackrel{\leftarrow}{hL^{n-\lfloor\frac{n}{3}\rfloor+1}_{\lfloor\frac{n}{3}\rfloor}}*(2,0,2)$ ($1$ pair, see the Supplement, Table 9, sequence 1).

To get $ \lfloor\frac{n}{3}\rfloor$ pairs in common for $n\geq 72$ take: $A=A_{2\lfloor\frac{n}{3}\rfloor-9}^n,\,B=L_4^7,\,C=hL_{11}^{\lfloor\frac{n}{3}\rfloor-1},\,D=S_3$; $A,B,C,D$ and $\stackrel{\leftarrow}{A},\stackrel{\leftarrow}{B}$ ($1$ pair from Table \ref{n=4t}, sequence 5), $C, d(D)$.

{\bf The number of pairs in the interval  \mbox {\boldmath ${(\lfloor\frac{n}{3}\rfloor, 2\lfloor\frac{n}{3}\rfloor]}$}:} We split this case in another eight subcases. These subcases will follow the same technique with a few differences from case to case.

{\bf  \mbox {\boldmath ${n=12m}$} with  \mbox {\boldmath ${m\equiv 2\;\mathrm{(mod\;8)}}$}:} We find the number of pairs in the interval $(\lfloor\frac{n}{3}\rfloor, 2\lfloor\frac{n}{3}\rfloor]$ using our second main construction recursively. The proof includes four sequences of intervals and nine isolated cases. Using our second main construction we get seven isolated cases ($\lfloor\frac{n}{3}\rfloor+1,\lfloor\frac{n}{3}\rfloor+2,2\lfloor\frac{n}{3}\rfloor-5,2\lfloor\frac{n}{3}\rfloor-4,2\lfloor\frac{n}{3}\rfloor-3,2\lfloor\frac{n}{3}\rfloor-1,2\lfloor\frac{n}{3}\rfloor$, denoted by $|$ in Figure \ref{figure}). Using special construction we get another two isolated cases ($\lfloor\frac{n}{3}\rfloor+3,2\lfloor\frac{n}{3}\rfloor-2$, denoted by $*$ in Figure \ref{figure}). We get the rest of the pairs in sequences of intervals as follows. We construct $\lfloor\frac{m}{8}\rfloor$ Skolem sequences in a certain way (we call these Skolem sequences, Sequence 1). Then we manipulate these Skolem sequences to obtain two sequences of intervals, the first left sequence and the first right sequence of intervals (first, third, etc in Figure \ref{figure}). We also construct other Skolem sequences and call them Sequence 2. We manipulate these Skolem sequences to obtain other two sequences of intervals, the second left and the second right sequence of intervals (second, fourth, etc in Figure \ref{figure}). The union of all these intervals and the isolated cases covers the whole interval ($\lfloor\frac{n}{3}\rfloor,2\lfloor\frac{n}{3}\rfloor$]. The nine isolated cases are described below.

\begin{figure}
\begin{center}
\begin{tikzpicture}[style=thick]

\draw (-9.1,5) node {$[\frac{n}{3}]$};
\draw (5,5) node {$2[\frac{n}{3}]$};
\draw (-2,4.5) node {$[-|-|-*----------------------|-|-|-*-|-]$};

\draw[gray] (-6.3,4) node {{$[----]$}};
\draw[gray] (1.2,4) node {{$[----]$}};

\draw (-5.5,3.5) node  {$[-----]$};
\draw (0.5,3.5) node {$[-----]$};

\draw[gray] (-5,3) node {{$[------]$}};
\draw[gray] (-0.2,3) node {{$[------]$}};

\draw (-4.5,2.5) node {$[-------]$};
\draw (-0.7,2.5) node {$[-------]$};

\draw (-2.8,2) node {$.$};
\draw (-2.8,1.75) node {$.$};
\draw (-2.8,1.5) node {$.$};

\draw (-4,0.75) node {{$[-------]$}};
\draw (-1,0.75) node {{$[-------]$}};


\draw (-8,2) node {${Two~left}$};
\draw (-8,1.6) node {$sequences$};
\draw (-8,1.2) node {$of~intervals$};

\draw (3.5,2) node {$Two~right$};
\draw (3.5,1.6) node {$sequences$};
\draw (3.5,1.2) node {$of~intervals$};

\end{tikzpicture}

\end{center}
\caption{Sequences 1 and 2}
\label{figure}
\end{figure}
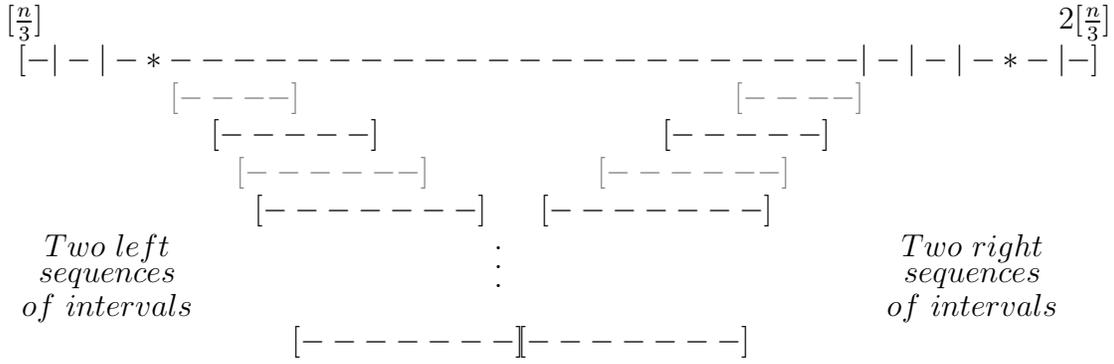

To get $\lfloor\frac{n}{3}\rfloor+1,\lfloor\frac{n}{3}\rfloor+2$ pairs in common for $n\geq24$ take: $A=A_{2\lfloor\frac{n}{3}\rfloor-5}^n;\;B=S_{4},\;C=L_{5}^{\lfloor\frac{n}{3}\rfloor+1}$; $A,sp(B),C$ and $\stackrel{\leftarrow}{A},sp(B),C$.

To get $11,\ldots,16,18$ pairs in common for $n=24$ take: $A,sp(B),C$ and $A,sp(B),\stackrel{\leftarrow}{C}$ ($0,1,3$ pairs from Table \ref{n=4t}, sequence 5).

To get $\lfloor\frac{n}{3}\rfloor +3$ pairs in common for $n\geq 72$ take: $A=A_{2\lfloor\frac{n}{3}\rfloor-9}^n,\;B=L_4^7,\;C=hL_{11}^{\lfloor\frac{n}{3}\rfloor-1},\;D=S_3$; $A,B,C,D$ and $\stackrel{\leftarrow}{A},\stackrel{\leftarrow}{B}$ ($1$ pair from Table \ref{n=4t}, sequence 5), $C,D$.

To get $2\lfloor\frac{n}{3}\rfloor-5, 2\lfloor\frac{n}{3}\rfloor-4,2\lfloor\frac{n}{3}\rfloor-1$ pairs in common for $n\geq36$ take: $A,sp(B),C$ and $A,sp(B),\stackrel{\leftarrow}{C}$ ($0$ pairs from Table \ref{n=4t}, sequence 3).

To get $2\lfloor\frac{n}{3}\rfloor-3$ pairs in common for $n\geq72$ take: $A=A_{2\lfloor\frac{n}{3}\rfloor-17}^n,\;B=S_{13},\;C=L_{14}^{\lfloor\frac{n}{3}\rfloor+4}$; $A,B,C$ and $A,B,\stackrel{\leftarrow}{C}$ ($1$ pair from Table \ref{n=4t}, sequence 1).

To get $2\lfloor\frac{n}{3}\rfloor -2$ pairs in common for $n\geq 48$ take: $A=A_{2\lfloor\frac{n}{3}\rfloor-9}^n,\;B=L_2^7,\;C=L_9^{\lfloor\frac{n}{3}\rfloor+1}(1,1)$; $A,B,C$ and $A,B,\stackrel{\leftarrow}{C}$ ($0$ pairs, see the Supplement, Table 8, sequence 1).

To get $2\lfloor\frac{n}{3}\rfloor$ pairs in common for $n\geq12$ take: $A=A_{2\lfloor\frac{n}{3}\rfloor-1}^n,\;B=S_{1},\;C=L_{2}^{\lfloor\frac{n}{3}\rfloor}$; $A,B,C$ and $A,B,d(C)$.

We get the rest of the pairs in sequences of intervals.

{\bf Sequence 1:} for $n\geq 72$,
\begin{center}
$A_1=A_{t_{1}=2\lfloor\frac{n}{3}\rfloor-17}^n,\;B_{1}=S_{13},\;C_{1}=L_{d_1=14}^{m_1=\lfloor\frac{n}{3}\rfloor+4}$

$A_2=A_{t_{2}=t_1-16}^n,\;B_{2}=S_{ord(B_1)+12},\;C_{2}=L_{d_2=d_1+12}^{m_2=m_1+4}$

$\ldots$

$A_{\lfloor\frac{m}{8}\rfloor}=A_{t_{\lfloor\frac{m}{8}\rfloor}=t_{\lfloor\frac{m}{8}\rfloor-1}-16}^n,\;B_{\lfloor\frac{m}{8}\rfloor}=S_{ord (B_{\lfloor\frac{m}{8}\rfloor-1})+12},\;C_{\lfloor\frac{m}{8}\rfloor}=L_{d_{\lfloor\frac{m}{8}\rfloor}=d_{\lfloor\frac{m}{8}\rfloor-1}+12}^{m_{\lfloor\frac{m}{8}\rfloor}=m_{\lfloor\frac{m}{8}\rfloor-1}+4}$
\end{center}
{\bf The first left sequence of intervals} covers the pairs in the intervals:
\begin{center}
$[a_1=\lfloor\frac{n}{3}\rfloor+4,b_1=\lfloor\frac{n}{3}\rfloor+14];\;A_1,sp(B_1),C_1$ and $\stackrel{\leftarrow}{A_1},sp(B_1),C_1$

$[a_2=a_1+4,b_2=b_1+16];\;A_2,sp(B_2),C_2$ and $\stackrel{\leftarrow}{A_2},sp(B_2),C_2$

$\ldots$

$[a_{\lfloor\frac{m}{8}\rfloor}=a_{\lfloor\frac{m}{8}\rfloor-1}+4,b_{\lfloor\frac{m}{8}\rfloor}=b_{\lfloor\frac{m}{8}\rfloor-1}+16];\;A_{\lfloor\frac{m}{8}\rfloor},sp(B_{\lfloor\frac{m}{8}\rfloor}),C_{\lfloor\frac{m}{8}\rfloor}$ and $\stackrel{\leftarrow}{A_{\lfloor\frac{m}{8}\rfloor}},sp(B_{\lfloor\frac{m}{8}\rfloor}),C_{\lfloor\frac{m}{8}\rfloor}$.
\end{center}
{\bf The first right sequence of intervals} covers the pairs in the intervals:
\begin{center}
$[e_1=2\lfloor\frac{n}{3}\rfloor-16,f_1=2\lfloor\frac{n}{3}\rfloor-6];\;A_1,sp(B_1),C_1$ and $A_1,sp(B_1),\stackrel{\leftarrow}{C_1}$

$[e_2=e_1-16,f_2=f_1-4];\;A_2,sp(B_2),C_2$ and $A_2,sp(B_2),\stackrel{\leftarrow}{C_2}$

$\ldots$

$[e_{\lfloor\frac{m}{8}\rfloor}=e_{\lfloor\frac{m}{8}\rfloor-1}-16,f_{\lfloor\frac{m}{8}\rfloor}=f_{\lfloor\frac{m}{8}\rfloor-1}-4];\;A_{\lfloor\frac{m}{8}\rfloor},sp(B_{\lfloor\frac{m}{8}\rfloor}),C_{\lfloor\frac{m}{8}\rfloor}$ and $A_{\lfloor\frac{m}{8}\rfloor},sp(B_{\lfloor\frac{m}{8}\rfloor}),\stackrel{\leftarrow}{C_{\lfloor\frac{m}{8}\rfloor}}$.
\end{center}
{\bf Sequence 2:} for $n\geq72$,
\begin{center}
$A_1'=A_{t_{1}=2\lfloor\frac{n}{3}\rfloor-21}^n,\;B_{1}'=S_{16},\;C_{1}'=L_{d_1=17}^{m_1=\lfloor\frac{n}{3}\rfloor+5}$

$A_2'=A_{t_{2}=t_1-16}^n,\;B_{2}'=S_{ord(B_1)+12},\;C_{2}'=L_{d_2=d_1+12}^{m_2=m_1+4}$

$\ldots$

$A_{\lfloor\frac{m}{8}\rfloor}'=A_{t_{\lfloor\frac{m}{8}\rfloor}=t_{\lfloor\frac{m}{8}\rfloor-1}-16}^n,\;B_{\lfloor\frac{m}{8}\rfloor}'=S_{ord (B_{\lfloor\frac{m}{8}\rfloor-1})+12},\;C_{\lfloor\frac{m}{8}\rfloor}'=L_{d_{\lfloor\frac{m}{8}\rfloor}=d_{\lfloor\frac{m}{8}\rfloor-1}+12}^{m_{\lfloor\frac{m}{8}\rfloor}=m_{\lfloor\frac{m}{8}\rfloor-1}+4}$
\end{center}
{\bf The second left sequence of intervals} covers the pairs in the interval:
\begin{center}
$[c_1=\lfloor\frac{n}{3}\rfloor+5,d_1=\lfloor\frac{n}{3}\rfloor+18];\;A_1',sp(B_1'),C_1'$ and $\stackrel{\leftarrow}{A_1'},sp(B_1'),C_1'$

$[c_2=c_1+4,d_2=d_1+16];\;A_2',sp(B_2'),C_2'$ and $\stackrel{\leftarrow}{A_2'},sp(B_2'),C_2'$

$\ldots$

$[c_{\lfloor\frac{m}{8}\rfloor}=c_{\lfloor\frac{m}{8}\rfloor-1}+4,d_{\lfloor\frac{m}{8}\rfloor}=d_{\lfloor\frac{m}{8}\rfloor-1}+16];\;A_{\lfloor\frac{m}{8}\rfloor}',sp(B_{\lfloor\frac{m}{8}\rfloor}'),C_{\lfloor\frac{m}{8}\rfloor}'$ and $\stackrel{\leftarrow}{A_{\lfloor\frac{m}{8}\rfloor}'},sp(B_{\lfloor\frac{m}{8}\rfloor}'),C_{\lfloor\frac{m}{8}\rfloor}'$.
\end{center}
{\bf The second right sequence of intervals} covers the pairs in the interval:
\begin{center}
$[g_1=2\lfloor\frac{n}{3}\rfloor-21,h_1=2\lfloor\frac{n}{3}\rfloor-8];\;A_1',sp(B_1'),C_1'$ and $A_1',sp(B_1'),\stackrel{\leftarrow}{C_1'}$

$[g_2=g_1-16,h_2=h_1-4];\;A_2',sp(B_2'),C_2'$ and $A_2',sp(B_2'),\stackrel{\leftarrow}{C_2'}$

$\ldots$

$[g_{\lfloor\frac{m}{8}\rfloor}=g_{\lfloor\frac{m}{8}\rfloor-1}-16,h_{\lfloor\frac{m}{8}\rfloor}=h_{\lfloor\frac{m}{8}\rfloor-1}-4];\;A_{\lfloor\frac{m}{8}\rfloor}',sp(B_{\lfloor\frac{m}{8}\rfloor}'),C_{\lfloor\frac{m}{8}\rfloor}'$ and $A_{\lfloor\frac{m}{8}\rfloor}',sp(B_{\lfloor\frac{m}{8}\rfloor}'),\stackrel{\leftarrow}{C_{\lfloor\frac{m}{8}\rfloor}'}$.
\end{center}
The union of all these intervals and the nine isolated cases covers the entire interval $(\lfloor\frac{n}{3}\rfloor, 2\lfloor\frac{n}{3}\rfloor]$, i.e., $\{\lfloor\frac{n}{3}\rfloor+1,\lfloor\frac{n}{3}\rfloor+2\} \cup \{2\lfloor\frac{n}{3}\rfloor,2\lfloor\frac{n}{3}\rfloor-1,2\lfloor\frac{n}{3}\rfloor-3,2\lfloor\frac{n}{3}\rfloor-4,2\lfloor\frac{n}{3}\rfloor-5\} \cup \{\lfloor\frac{n}{3}\rfloor+3,2\lfloor\frac{n}{3}\rfloor-2\}
 \cup_{i=1}^{\lfloor\frac{m}{8}\rfloor} [a_i,b_i]  \cup_{i=1}^{\lfloor\frac{m}{8}\rfloor} [c_i,d_i] \cup_{i=1}^{\lfloor\frac{m}{8}\rfloor} [e_i,f_i] \cup_{i=1}^{\lfloor\frac{m}{8}\rfloor} [g_i,h_i]=(\lfloor\frac{n}{3}\rfloor, 2\lfloor\frac{n}{3}\rfloor]$.

Note that the sequences of intervals described above overlap. Also note that we are not using all the intervals with all the orders. For some orders the pairs are covered by the first left and first right sequence of intervals, while for some other orders we have to use all the intervals.

For $n=12m$ with $m\equiv 0,1,3,4,5,6,7\;\mathrm{(mod\;8)}$ we are using the same sequences. For $m\equiv 3,4,5,6,7\;\mathrm{(mod\;8)}$, Sequence 1 has one extra step, i.e., $A_1,A_2,\ldots,A_{\lfloor\frac{m}{8}\rfloor+1}$, and Sequence 2 is not needed.

{\bf Case 2.  \mbox {\boldmath ${n\equiv 1\;\mathrm{(mod \;12)}}$}}

{\bf The number of pairs in the interval  \mbox {\boldmath ${[0,\lfloor\frac{n}{3}\rfloor]}$}:} To get $0,1,\ldots,\lfloor\frac{n}{3}\rfloor$ pairs in common for $n\geq 13$ take: $sp(S_{\lfloor\frac{n}{3}\rfloor})L^{n-\lfloor\frac{n}{3}\rfloor}_{\lfloor\frac{n}{3}\rfloor+1}$ and  $sp(S_{\lfloor\frac{n}{3}\rfloor})\stackrel{\leftarrow}{L^{n-\lfloor\frac{n}{3}\rfloor}_{\lfloor\frac{n}{3}\rfloor+1}}$ ($0,1,2 $ or $3$ pairs from Table \ref{n=4t}, sequence 5). We use here the existing intersection spectrum of two Skolem sequences of a smaller order, that is, two $S_{\lfloor\frac{n}{3}\rfloor}$ have $0,1,\ldots,\lfloor\frac{n}{3}\rfloor-3$ and $\lfloor\frac{n}{3}\rfloor-3$ pairs in common.

{\bf The number of pairs in the interval  \mbox {\boldmath ${(\lfloor\frac{n}{3}\rfloor, 2\lfloor\frac{n}{3}\rfloor]}$}:} To get $4,5,7$ pairs in common for $n=13$ take: $S_4 L_5^{9}$ and $S_4 \stackrel{\leftarrow}{L_5^{9}}$ ($0,1,3$ pairs from Table \ref{n=4t}, sequence 5).

To get $\lfloor\frac{n}{3}\rfloor$, $\lfloor\frac{n}{3}\rfloor+1$, $\lfloor\frac{n}{3}\rfloor+2$ for $n\geq 25$ take: $S_{\lfloor\frac{n}{3}\rfloor} L_{\lfloor\frac{n}{3}\rfloor+1}^{2\lfloor\frac{n}{3}\rfloor+1}$ and  $S_{\lfloor\frac{n}{3}\rfloor} \stackrel{\leftarrow}{L_{\lfloor\frac{n}{3}\rfloor+1}^{2\lfloor\frac{n}{3}\rfloor+1}}$ ($0,1,2$ pairs from Table \ref{n=4t}, sequence 5).

To get $2\lfloor\frac{n}{3}\rfloor-3$ pairs in common for $n\geq 37$ take: $A=A_{2\lfloor\frac{n}{3}\rfloor-3}^n, B=L_2^3,C=L_5^{\lfloor\frac{n}{3}\rfloor}(1,1)$; $A,B,C$ and $A,\stackrel{\leftarrow}{B}$ ($0$ pairs from Table \ref{n=4t}, sequence 5), $\stackrel{\leftarrow}{C}$ ($0$ pairs, see the Supplement, Table 8, sequence 2).

To get $2\lfloor\frac{n}{3}\rfloor-2$ pairs in common for $n\geq49$ take: $A=A_{2\lfloor\frac{n}{3}\rfloor-11}^n;\;B=S_{9};\;C=L_{10}^{\lfloor\frac{n}{3}\rfloor+3}$; $A,B,C$ and $A,B,\stackrel{\leftarrow}{C}$ ($0$ pairs from Table \ref{n=4t}, sequence 3).

To get $2\lfloor\frac{n}{3}\rfloor-1$ pairs in common for $n\geq 73$ take:  $A=A_{2\lfloor\frac{n}{3}\rfloor-15}^n;\;B=S_{12};\;C=L_{13}^{\lfloor\frac{n}{3}\rfloor+4}$; $A,B,C$ and $A,B,\stackrel{\leftarrow}{C}$ ($2$ pairs from Table \ref{n=4t}, sequence 1).

To get $2\lfloor\frac{n}{3}\rfloor$ pairs in common for $n\geq 37$ take: $A=A_{2\lfloor\frac{n}{3}\rfloor-3}^n, B=L_2^3,C=L_5^{\lfloor\frac{n}{3}\rfloor}(1,1)$; $A,B,C$ and $A,B,\stackrel{\leftarrow}{C}$ ($0$ pairs, see the Supplement, Table 8, sequence 2).

Sequences 1 and 2, and the left and right intervals are obtained using the same procedure as in $n\equiv 0\;\mathrm{(mod\;12)}$. The only difference is in the number of steps for sequence 1 and sequence 2. Table \ref{steps} gives the number of steps in sequences 1 and 2. Since the procedure of obtaining sequences 1 and 2 and the left and right intervals are identical with the case $n\equiv 0\;(mod\;12)$, we only list below the starting sequences and intervals.

Sequence 1 for $n\geq 49$, starts with $A_1=A_{t_{1}=2\lfloor\frac{n}{3}\rfloor-11}^n,\;B_{1}=S_{9},\;C_{1}=L_{d_1=10}^{m_1=\lfloor\frac{n}{3}\rfloor+3}$. The first left sequence of intervals starts with $[a_1=\lfloor\frac{n}{3}\rfloor+3,b_1=\lfloor\frac{n}{3}\rfloor+9]$ and the first right sequence of intervals starts with $[e_1=2\lfloor\frac{n}{3}\rfloor-11,f_1=2\lfloor\frac{n}{3}\rfloor-5]$.

Sequence 2 for $n\geq 73$, starts with $A_1'=A_{t_{1}=2\lfloor\frac{n}{3}\rfloor-15}^n,\;B_{1}'=S_{12},\;C_{1}'=L_{d_1=13}^{m_1=\lfloor\frac{n}{3}\rfloor+4}$. The second left sequence of intervals starts with $[c_1=\lfloor\frac{n}{3}\rfloor+4,d_1=\lfloor\frac{n}{3}\rfloor+13]$ and the second right sequence of intervals starts with $[g_1=2\lfloor\frac{n}{3}\rfloor-13,h_1=2\lfloor\frac{n}{3}\rfloor-4]$.

\begin{table}[hp]
\begin{center}
\begin{tabular}{|c|c|c|}
\hline
$m$ & Sequence 1 & Sequence 2\\
\hline
$0,1,2,3,4,5\;\mathrm{(mod\;8)}$ & $\lfloor\frac{m}{8}\rfloor+1$ & $\lfloor\frac{m}{8}\rfloor$\\
\hline
$6\;\mathrm{(mod\;8)}$ & $\lfloor\frac{m}{8}\rfloor+1$ & $\lfloor\frac{m}{8}\rfloor+1$\\
\hline
$7\;\mathrm{(mod\;8)}$ & $\lfloor\frac{m}{8}\rfloor+2$ & $\lfloor\frac{m}{8}\rfloor+1$\\
\hline
\end{tabular}
\caption{The number of steps in sequences 1 and 2} \label{steps}
\end{center}
\end{table}

The cases $n\equiv 4,5,8,9 (\textup{mod}\; 12)$ are listed in the Supplement.
\end{proof}

\section{Applications to  \mbox {\boldmath ${\lambda}$}-Fold Cyclic Triple System}

Our aim in this section is to produce necessary and sufficient
conditions for a vector to be the fine structure of a CTS$(v,2)$ for
$v\equiv1,3\;\mathrm{(mod\;6)}$, $v\neq 9$ and of a CTS$(v,\lambda)$ for
$v\equiv 1,7\;\mathrm{(mod\;24)}$ and $\lambda=3,4$. Then we extend this to the
fine structure of a cyclic directed triple system and a cyclic
Mendelsohn triple system.

\begin{df} The fine structure of a CTS$(v,\lambda)$ is the vector $(c_{1},c_{2},\ldots,c_{\lambda})$, where $c_{i}$ is the number of base blocks repeated exactly $i$ times in the cyclic triple system.
\end{df}

\begin{df}
Let $\mathbb{Z}_{v}$ be an additive abelian group. Then the subsets $D_i=\{d_{i1},d_{i2},d_{i3}\}$ form a $(v,3,\lambda)$ difference system if every non-zero element of $\mathbb{Z}_{v}$ occurs $\lambda$ times among the differences $d_{ix}-d_{iy}$ ($i=1,\ldots,n$ and $x,y=1,2,3$). The sets $D_{i}$ are called base blocks.
\end{df}

\begin{remark}
If $D_1,\ldots,D_n$ form a $(v,3,\lambda)$ difference system then the translates of the base blocks, $D_i+g=\{d_{i1}+g,d_{i2}+g,d_{i3}+g\}$, $g\in \mathbb{Z}_v$ form a CTS$(v,\lambda)$.
\end{remark}

We define $Int_c(v)=\{k$: there exist two cyclic triple systems of order $v$
with $k$ base blocks in common\}.

\subsection{The Fine Structure of a CTS\mbox {\boldmath ${(v,2)}$}}

We use Skolem sequences to find the fine structure of a CTS$(v,2)$ for
$v\equiv 1,3\;\mathrm{(mod\;6)}$ and $v\neq 9$.

\begin{theorem}\label{th1}
For all $n\in \mathbb{N}$, $Int_{c}(6n+1)=\{0,1,2,\ldots,n\}.$
\end{theorem}
\begin{proof} We divide the proof into two cases depending on $n$. 

{\bf Case 1.  \mbox {\boldmath ${n\equiv 0,1\;\mathrm{(mod\;4)}}$}}. Let $S_{n}$ be a Skolem sequence of order $n$ and let $\{(a_i,b_i)| 0\leq i\leq n$\}, be the pairs determined by this Skolem sequence.

To get two CSTS$(6n+1)$ which are disjoint take the base blocks of the form $\{\{0,a_i+n,b_i+n\}\ \mathrm{(mod\;6n+1)}|i=1,\ldots,n\}$ and $\{\{0,i,b_{i}+n\}\ \mathrm{(mod\;6n+1)}| i=1,\ldots,n\}$.

To get two CSTS$(6n+1)$ with $j=1,\ldots,n-1$ base blocks in common, take the base blocks below for  $j=1,\ldots, n-1$:
\begin{enumerate}
\item $\{\{0,a_i+n,b_i+n\}\ \mathrm{(mod\;6n+1)}|i=1,\ldots,j\}$ and $\{\{0,i,b_{i}+n\}\ \mathrm{(mod\;6n+1)}|\linebreak i=j+1,\ldots,n\}$
\item $\{\{0,a_{i}+n,b_{i}+n\}\ \mathrm{(mod\;6n+1)}|i=1,\ldots,n\}.$
\end{enumerate}

For two systems with $n$ base blocks in common, take $\{0,a_i+n,b_i+n\}\ \mathrm{(mod\;6n+1)}|i=1,\ldots,n\}$ twice.

{\bf Case 2.  \mbox {\boldmath ${n\equiv 2,3\;\mathrm{(mod\;4)}}$}}. Let $hS_{n}$ be a hooked Skolem sequence of order $n$ and apply the same arguments as in Case $1$.
\end{proof}

\begin{corollary}\label{fine}
For all $n\in \mathbb{N}$, the fine structure of a CTS$(6n+1,2)$ is the vector $(2n-2i,i)$ for $0\leq i\leq n$.
\end{corollary}

\begin{proof} Two CSTS$(6n+1)$ from Theorem \ref{th1} gives a cyclic two-fold triple system of order $6n+1$ with $0,1,\ldots,n$ base blocks repeated twice.
\end{proof}

\begin{corollary}\label{dts}
For all $n\in \mathbb{N}$, the fine structure of a cyclic DTS$(6n+1,2)$ (respectively a cyclic MTS$(6n+1,2)$) is the vector $(4n-4i,i)$ for $0\leq i\leq n$.
\end{corollary}

\begin{proof} Replace each block $\{a,b,c\}$ of a CTS$(6n+1,2)$ in the Corollary \ref{fine} with $\{[a,b,c]$, $[c,b,a]\}$ (respectively $\{\langle a,b,c\rangle$, $\langle a,c,b\rangle\}$). The collection of blocks form a cyclic directed triple system (respectively a cyclic Mendelsohn triple system) of order $v$ with $0,2,\ldots,2n$ directed triples repeated twice.
\end{proof}

\begin{theorem}\label{th2}
For all $n\in \mathbb{N}, n\neq 1$, $Int_{c}(6n+3)=\{1,2,\ldots,n+1\}$.
\end{theorem}

\begin{proof}
Similar to the proof of Theorem \ref{th1}.
\end{proof}

\begin{corollary}\label{dts2}
For all $n\in \mathbb{N}, n\neq 1$, the fine structure of a cyclic DTS$(6n+3,2)$ and of a cyclic MTS$(6n+3,2)$ is the vector $(4n-4i+4,2i)$ for $1\leq i\leq n+1$.
\end{corollary}

\begin{proof} Replace each block $\{a,b,c\}$ in the Corollary \ref{fine} with $\{[a,b,c]$, $[c,b,a]\}$ (respectively $\{\langle a,b,c \rangle$, $\langle a,c,b\rangle\}$).
\end{proof}

\subsection{The Fine Structure of a CTS\mbox {\boldmath ${(v,3)}$}}

We use next the intersection spectrum of two Skolem sequences of
order $n$ to determine the fine structure of a CTS$(v,3)$, a cyclic
DTS$(v,3)$ and a cyclic MTS$(v,3)$ for $v\equiv 1,7\;\mathrm{(mod\;24)}$.
Since any two of $(c_1,c_2,c_3)$ determine the third, we use the
following notation for the fine structure: $(t,s)$ is said to be the fine structure of a CTS$(v,3)$ if $c_2=t$
and $c_3=s$ (and hence $c_1=3n-2t-3s$, where $n=\frac{v-1}{6}$). We
need to know $(t,s)$ which can possibly arise as a fine structure.
For $v\equiv 1,7\;\mathrm{(mod\;24)}$, $n=\frac{v-1}{6}$, we define $CFine(v,3)=\{(t,s)|0\leq s\leq n, 0\leq t\leq n-s\}$.

$CFine(v,3)$ is the set of all fine structures which arise in a CTS$(v,3)$. We use the intersection spectrum of two Skolem sequences of order $n$ to show, $\{(t,s)|0\leq s\leq n, 0\leq t\leq n-s\}$ is the fine structure of a CTS$(v,3)$ with the possible exceptions $\{(0,n-1),(0,n-2),(1,n-2)\}$.

\begin{lemma}
Let CTS$(v,3)$ be a cyclic three-fold triple system and $n=\frac{v-1}{6}$. If $(t,s)\in CFine(v,3)$, then $0\leq s\leq n$ and $0\leq t\leq n-s$.
\end{lemma}

\begin{proof} Let CTS$(v,3)$ be a cyclic three-fold triple system, $v\equiv 1,7\ \mathrm{(mod\;24)}$ and $n=\frac{v-1}{6}$. Then there exists a difference system $D_1,\ldots,D_n$ such that all the non-zero elements in $\mathbb{Z}_v$ appear as a difference in $D_1,\ldots,D_n$. Each difference will appear three times since $\lambda=3$.

{\bf Case 1.} Let $s> n$ and $s=n+r$, where $r\geq 1$. Assume $s$ base blocks appear three times in a system. Since each base block uses six distinct differences, there will be $6n+6r$ distinct differences used. This is impossible since there are $6n$ distinct differences in total.

{\bf Case 2.} Let $t>n-s$ and $t=n-s+r$, where $r\geq 1$. If $s$ base blocks are repeated three times, these will use $6s$ distinct differences, each three times. So there will be $6n-6s$ distinct differences left for the other base blocks.
If $t=n-s+r$, where $r\geq 1$ base blocks appear twice, these will have to use $6n-6s+6r$ distinct differences. This is impossible since there are only $6n-6s$ distinct differences left.
\end{proof}


\begin{theorem}\label{1}
Let $v\equiv 1, 7\;\mathrm{(mod\;24)}$ with $v\geq7$ and $n=\frac{v-1}{6}$. Then the vector $(n-i,i)$ for $i=0,\ldots,n$ is the fine structure of a CTS$(v,3)$.
\end{theorem}

\begin{proof} Let $S_{n}$ be a Skolem sequence of order $n$ and let $\{(a_{i},b_{i})|1\leq i\leq n\}$ be the pairs determined by this Skolem sequence.

To get a CTS$(v,3)$ with the fine structure $(n,0)$, take the following three systems $\{\{0,a_{i}+n,b_{i}+n\}\ \mathrm{(mod\;6n+1)},i=1,\ldots,n\}$, $\{\{0,i,b_{i}+n\}\ \mathrm{(mod\;6n+1)},i=1,\ldots,n\}$ and $\{\{0,a_{i}+n,b_{i}+n\}\ \mathrm{(mod\;6n+1)},i=1,\ldots,n\}$.

To get a CTS$(v,3)$ with the fine structure $(n-i,i),i=1,\ldots,n-1$, repeat the process below for $j=1,\ldots,n-1$:
\begin{enumerate}
\item $\{\{0,a_{i}+n,b_{i}+n\},i=1,\ldots,n\}$
\item $\{\{0,a_{i}+n,b_{i}+n\},i=1,\ldots,j\}$ and $\{\{0,i,b_{i}+n\}$ for $i=j+1,\ldots,n\}$
\item $\{\{0,a_{i}+n,b_{i}+n\},i=1,\ldots,n\}$.
\end{enumerate}

To get $(0,n)$ as a fine structure, take $\{0,a_{i}+n,b_{i}+n\},i=1\ldots,n$ three times.
\end{proof}


\begin{theorem}\label{2}
Let $v\equiv 1,7\;\mathrm{(mod\;24)}$ with $v\geq25$ and $n=\frac{v-1}{6}$, and let $p$ be the number of pairs in common between two Skolem sequences of order $n$. The vector $(p-i+j,i)$ with $i=0,\ldots,p$ and $j=0,\ldots,n-i$ is the fine structure of a CTS$(v,3)$.
\end{theorem}

\begin{proof} Let $S_{n}$ and $S_{n}'$ to be two Skolem sequences of order $n$ with $p$ pairs in common, $0\leq p\leq n-3$. Let $\{(a_{i},b_{i})|1\leq i\leq n\}$ and $\{(\alpha_{i},\beta_{i})|1\leq i\leq n\}$ be the pairs determined by these two Skolem sequences and take the following three CSTS$(6n+1)$ for $i=1,\ldots,n$: $\{\{0,a_{i}+n,b_{i}+n\}\ \mathrm{(mod\;6n+1)}\}$, $\{\{0,i,b_{i}+n\}\ \mathrm{(mod\;6n+1)}\}$ and $\{\{0,\alpha_{i}+n,\beta_{i}+n\}\ \mathrm{(mod\;6n+1)}\}$.

Since the two Skolem sequences intersect in $p$ pairs, exactly $p$ base blocks from the first system will be identical with $p$ base blocks in the third system.
Replacing the base blocks $\{0,a_{i}+n,b_{i}+n\}$ by the base blocks $\{0,i,b_{i}+n\}$ as in Theorem \ref{1}, we get a CTS$(v,3)$ with the fine structure $(p-i+j,i)$ for $i=0,\ldots,p$ and $j=0,\ldots,n-i$.
\end{proof}


\begin{theorem}\label{3}
Let $v\equiv 1, 7\;\mathrm{(mod\;24)}$ with $v\geq25$ and $n=\frac{v-1}{6}$. The vector $(j,i)$ with $i=0,\ldots,n-3$ and $j=0,\ldots,n-i$ is the fine structure of a CTS$(v,3)$ where $v\neq 31$. For $v=31$, $(j,i)$ with $i=0,1$ and $j=0,\ldots,n-i$ is the fine structure of a CTS$(31,3)$.
\end{theorem}
\begin{proof} Let $v\equiv 1, 7\;\mathrm{(mod\;24)}$ with $v\geq25, v\neq 31$ and apply Theorem \ref{2} for $p=0,\ldots,n-3$.
Let $v=31$ and apply Theorem \ref{2} for $p=0,1$.
\end{proof}

And now we can generalize these results to the fine structure of a cyclic  DTS$(v,3)$ and a cyclic MTS$(v,3)$. The fine structure is defined in a similar way, with the only exception that $c_1=6n-4t-6s$.

\begin{corollary}\label{cor1}
Let $v\equiv 1, 7\;\mathrm{(mod\;24)}$ with $v\geq25,\;v\neq31$. Let $n=\frac{v-1}{6}$ and let $p$ be the number of pairs in common between two Skolem sequences of order $n$. Then the vector $(2j,2i)$ for $i=0,\ldots,n-3$ and $j=0,\ldots,n-i$, and the vector $(2n-2i,2i)$ for $i=0,\ldots,n$ is the fine structure of a cyclic DTS$(v,3)$ and of a cyclic MTS$(v,3)$. For $v=31$, $(2j,2i)$ for $i=0,1$ and $j=0,\ldots,5-i$, and the vector $(10-2i,2i)$ for $i=0,\ldots,5$ is the fine structure of a DTS$(31,3)$ and of a MTS$(31,3)$.
\end{corollary}

\begin{proof} Replace each triple $\{a,b,c\}$ of the cyclic Steiner triple systems in Theorem \ref{3} by the directed triples $\{[a,b,c]$, $[c,b,a]\}$ (respectively by the cyclic triples $\{\langle a,b,c\rangle$, $\langle a,c,b\rangle \}$).
\end{proof}

\subsection{The Fine Structure of a CTS\mbox {\boldmath ${(v,4)}$}}

We use the intersection spectrum of two Skolem sequences of order $n$ to determine the fine structure of a CTS$(v,4)$, where $v\equiv 1,7\;\mathrm{(mod\;24)}$ and $n=\frac{v-1}{6}$. Since any three of $(c_1,c_2,c_3,c_4)$ determine the fourth, we use the following notation for the fine structure: $(t,s,u)$ is said to be the fine structure of a CTS$(v,4)$ if $c_2=t$, $c_3=s$, $c_4=u$ and hence, $c_1=4n-4u-3s-2t$.

We define $CFine(v,4)=\{(t,s,u)| 0\leq u\leq n, \;0\leq s\leq n-u,\;0\leq t\leq 2n-2u-2s\}$. Since two Skolem sequences of order $n$ do not intersect in $n-1$
or $n-2$ pairs, we are not able to find the entire spectrum of the
fine structure of a CTS$(v,4)$. We use the intersection spectrum of
two Skolem sequences to find the fine structure of CTS$(v,4)$ with a
few possible exceptions. The following vectors are the possible
exceptions in the fine structure of a CTS$(v,4)$:
$(0,0,n-1),(1,0,n-1),(0,0,n-2),(1,0,n-2),(2,0,n-2),(3,0,n-2),(0,1,n-2),(1,1,n-2)(0,n-2-i,i),(1,n-2-i,i),(2,n-2-i,i),(3,n-2-i,i),(0,n-1-i,i),(1,n-1-i,i)$,
$0\leq i\leq n-3$. These vectors cannot be found using Skolem sequences.

\begin{theorem}
Let $v\equiv 1,7\;\mathrm{(mod\;24)}$ and $n=\frac{v-1}{6}$. If $(t,s,u)\in CFine(v,4)$ then $0\leq u\leq n, \;0\leq s\leq n-u$ and $0\leq t\leq 2n-2u-2s$.
\end{theorem}

\begin{proof} Let CTS$(v,4)$ be a cyclic four-fold triple system, $v\equiv 1,7\;\mathrm{(mod\;24)}$ and $n=\frac{v-1}{6}$. Then there exists a difference system $\{D_1,\ldots,D_n\}$ such that all the non-zero elements in $\mathbb{Z}_v$, which are $6n$ in total, appear as a difference in $\{D_1,\ldots,D_n\}$. Each difference will appear four times since $\lambda=4$.

{\bf Case 1.} Assume $u> n$ and let $u=n+r$, where $r\geq 1$. If $u$ base blocks appear four times in a system, and each base block gives six distinct differences, they will use $6n+6r$ distinct differences. This is impossible since they are $6n$ in total.

{\bf Case 2.} Assume $s>n-u$ and let $s=n-u+r$, $r\geq 1$. So there are $6n$ distinct differences in total and if $u$ base blocks are repeated four times, these will use $6u$ differences, each four times. So there will be $6n-6u$ distinct differences left for the other base blocks.
If $s=n-u+r$, $r\geq 1$ base blocks appear three times in a system, these will have to use $6n-6u+6r$ differences. This is impossible since they are only $6n-6s$ differences left.

{\bf Case 3.} Assume $t>2n-2u-2s$ and let $t=2n-2u-2s+r$, $r\geq 1$. There are $u$ base blocks which appear four times in the system and will use $6u$ differences. There are $s$ base blocks which are repeated three times and use $6s$ differences each three times.

{\bf Case 4.} Assume $t=2n-2u-2s+r$ base blocks are repeated twice. These will use $6(2n-2s-2u+r)$ differences. This is impossible since there are only $6n-6u-6s$ differences left.
\end{proof}

\begin{theorem}
Let $v\equiv 1,7\;\mathrm{(mod\;24)}$ where $v\geq 7$ and $n=\frac{v-1}{6}$. Then the vector $(2n-2i,i-j,j)$ with  $0\leq j\leq n$ and $j\leq i\leq n$ is the fine structure of a CTS$(v,4)$.
\end{theorem}
\begin{proof} Take a Skolem sequence twice and use the same argument as in Theorem \ref{1}.
\end{proof}

\begin{theorem}\label{9}
Let $v\equiv 1,7\;\mathrm{(mod\;24)}$ with $v\geq25$ and $n=\frac{v-1}{6}$, and let $p$ be the number of pairs in common between two Skolem sequences of order $n$. Then the vector $(k,i,j)$, with $0\leq j\leq p$, $0\leq i\leq p-j$ and $2p-2i-2j\leq k\leq 2n-2i-2j$ is the fine structure of a CTS$(v,4)$.
\end{theorem}
\begin{proof} Similar to Theorem \ref {2}.
\end{proof}

\begin{theorem}\label{10}
Let $v\equiv 1,7\;\mathrm{(mod\;24)}$ with $v\geq25$ and $n=\frac{v-1}{6}$. Then the vector $(k,i,j)$ with $0\leq j\leq n-3$, $0\leq i\leq n-3-j$ and $0\leq k\leq 2n-2i-2j$ is the fine structure of a CTS$(v,4)$.
\end{theorem}

\begin{proof} Apply Theorem \ref{9} for $p=0,1,\ldots,n-3$.
\end{proof}

We can extend these results to the fine structure of a cyclic DTS$(v,4)$ and a cyclic MTS$(v,4)$. Here $c_1=8n-8j-6i-4k$. Again, we will not get the full spectrum.

\begin{corollary}
Let $v\equiv 1,7\;\mathrm{(mod\;24)}$ with $v\geq25$. Then the vector $(2k,2i,2j)$ with $0\leq j\leq n-3$, $0\leq i\leq n-3-j$ and $0\leq k\leq 2n-2i-2j$ and the vector $(4n-4i,2i-2j,2j)$ with $0\leq j\leq n$ and $j\leq i\leq n$ are the fine structure of a cyclic DTS$(v,4)$ and of a cyclic MTS$(v,4)$.
\end{corollary}

\begin{proof} Replace each triple $\{a,b,c\}$ of the cyclic systems in Theorem \ref{9} and Theorem \ref{10} by the directed triples $\{[a,b,c]$, $[c,b,a]\}$ (respectively by the cyclic triples $\{\langle a,b,c\rangle$, $\langle a,c,b\rangle \}$).
\end{proof}

\section{Conclusion and Future Research}

We proved that two Skolem sequences of
order $n$ have $0,1,\ldots,n-3,n$ pairs in common. Using the
intersection spectrum of two Skolem sequences we determined the fine structure of
a CTS$(v,3)$, a cyclic DTS$(v,3)$ and a cyclic MTS$(v,3)$ for $v\equiv
1,7\;\mathrm{(mod\;24)}$. We also determined, with a few possible exceptions, the fine
structure of a CTS$(v,4)$, a cyclic DTS$(v,4)$ and a cyclic MTS$(v,4)$
and we determined the number of possible repeated base blocks in
a CTS$(v,2)$, a cyclic DTS$(v,2)$ and a cyclic MTS$(v,2)$  for $v\equiv
1,3\;\mathrm{(mod\;6)}$ and $v\neq 9$. Open questions include:
\begin{enumerate}
\item Complete the solution for all the remaining cases for two disjoint Langford sequences of order $n$ and all admissible defects.
\item Find the intersection spectrum of two hooked Skolem sequences of order $n$.
\item Find the intersection spectrum of two Rosa and two hooked Rosa sequences of order $n$.
\item Find the exceptions in the fine structure of a four-fold triple system for $v\equiv 1,7\;\mathrm{(mod\;24)}$.
\item Find the fine structure of a CTS$(v,3)$ and of a CTS$(v,4)$ for $v\equiv 13, 19\;\mathrm{(mod\:24)}$.
\end{enumerate}



\begin{center}
 {\bf APPENDIX A}
\end{center}

Here we list the small cases for $n\equiv 0(\textup{mod}\;12)$. For some cases we used special constructions and for others we used a computer. We used a hill-climbing algorithm as in \cite{shalaby4} to construct Skolem sequences. To simplify our writing, we write a Skolem sequence by using each element once. For
example, $(1,1,4,2,3,2,4,3)$ will be written $1,4,2,3$. We place
the first element, say $i$, in the first position, then, we place
the same element, $i+1$ positions to its right. We place the
second element, say $j$, in the next available position from the left,
and place the same element $j+1$ positions to its right.
Continuing this procedure until all the elements are placed, we get
a Skolem sequence of order $n$. When two Skolem sequences have pairs
in common we underline the pairs which are in common. For example,
$\underline{1},4,2,3$ and $3,4,2$ means we have two Skolem sequences
of order $4$ with one pair in common. For the second Skolem sequence we will
not write the underlined pairs since we already know the positions
for those pairs. So to construct the second Skolem sequence, we
place the underlined elements on the same positions as in the first
sequence and then, we use the same procedure as before to place all
the other elements.  For simplicity, for a large sequence, we use $1,\ldots,9,10=a,\ldots,35=z,36=A,\ldots,61=Z$.

\mbox {\boldmath ${n\equiv 0\;\mathrm{(mod\;12)}}$}

For \underline{$n=12$} the small cases are $\{1,2,3,4,6,7\}$.

$1$ pair:  $S_1,L_2^{11}$ and $S_1,d(L_2^{11})$.

$2$ pairs: $4,\underline{a},b,7,\underline{9},c,5,8,6,3,2,1$ and $c,8,6,b,7,1,3,4,5,2$.

$3$ pairs: $\underline{2},8,4,7,\underline{a},c,9,b,1,5,6,\underline{3}$ and $5,6,9,b,4,c,8,7,1$.

$4$ pairs: $2-\textup{near}\; S_3,hL_4^9*(2,0,2)$ and $2-\textup{near} S_3,\stackrel{\leftarrow}{hL_4^9*(2,0,2)}$ ($2$ pairs, see the Supplement, Table 9, sequence 1)

$6$ pairs: $\underline{c},\underline{a},\underline{8},\underline{6},\underline{4},2,b,\underline{5},9,7,3,1$ and $b,9,7,3,2,1$.

$7$ pairs: $4,\underline{9},\underline{5},\underline{a},\underline{6},\underline{8},7,c,b,\underline{3},\underline{1},2$
and $c,4,b,7,2$.

For \underline{$n=24$} the small cases are $\{1,2,3,4,5,6,7,8\}$.

$1-5,8$ pairs: $hS_7$ with two different $hL_8^{17}$ which have one pair in common. The two hooked Langford $hL_8^{17}$ are:
$8,\underline{m},n,j,k,f,d,b,h,o,l,9,i,g,e,c,a$ and $o,k,i,g,e,c,a,8,n,l,j,h,f,d,b,9$.

$6$ pairs: $\underline{c},m,f,2,4,j,\underline{n},i,6,7,\underline{o},h,l,k,e,8,a,g,\underline{9},d,b,5,\underline{3},\underline{1}$ and
$j,m,8,2, b,h,5,l,i,a,k,d,\linebreak f,6,g,e,7,4$.

For \underline{$n=36$} the small cases are $\{12,15,16,18,21,22\}$.

$14,15,19$ pairs: $A=A_{17}^{36},\;B=S_{5}+1\;\textup{pair},\;C=hL_{6}^{13}$; $A,sp(B),C$ and $\stackrel{\leftarrow}{A},sp(B),C$.

$12$ pairs: $hS_{11}$ with two $hL_{12}^{25}$ with one pair in common. The two $L_{11}^{25}$ are:
$s,0,c,r,A,q,u,i,\linebreak l,n,c,g,y,\underline{v},o,x,t,m,w,e,f,k,j,p,h,d$ and $t,0,g,k,q,u,y, z,o,i,s,w,m,A,l,x,e,r,p,c,f,h,\linebreak n,d,j$.

$16$ pairs: $\underline{A},\underline{y},\underline{z},\underline{v},\underline{t},\underline{x},\underline{q},\underline{w},\underline{n},\underline{l},
\underline{u},i,f,\underline{s},b,\underline{r},6,7,\underline{p},2,\underline{o},\underline{m},k,j,h,e,g,d,a,5,3,c,4,9,8,1$
and $h,e,a, 3,8,1,k,j,i,g,d,f,7,2,6,c,5,b,9,4$.

$18$ pairs: $\underline{A},\underline{y},\underline{z},\underline{v},\underline{t},\underline{x},\underline{q},\underline{w},\underline{n},\underline{l},
\underline{u},i,f,\underline{s},b,\underline{r},6,7,\underline{p},2,\underline{o},\underline{m},\underline{k},\underline{j},h,e,g,d,a,5,3,c,4,9,8,1$
and $g,h,a, 3,8,1,i,f,d,e,7,2,6,c,5,b,9,4$.

$21$ pairs: $\underline{A},\underline{y},\underline{z},\underline{v},\underline{t},\underline{x},\underline{q},\underline{w},\underline{n},\underline{l},
\underline{u},i,f,\underline{s},b,\underline{r},6,7,\underline{p},2,\underline{o},\underline{m},\underline{k},\underline{j},\underline{h},e,\underline{g},d,a, 5,3,c,4,9,8,1$
and $d,a, b,4,3,f,c,6,e,5,7,1,8,9,2$.

$22$ pairs: $\underline{A},\underline{y},\underline{z},\underline{v},\underline{t},\underline{x},\underline{q},\underline{w},\underline{n},\underline{l},
\underline{u},i,\underline{f},\underline{s},b,\underline{r},6,7,\underline{p},2,\underline{o},\underline{m},\underline{k},\underline{j},\underline{h},e,
\underline{g},d,a, 5,3,c,4,9,8,1$
and $a,9,4,3,d,c,8,e,5,6,2,b,7,1$.

For \underline{$n=48$} the small cases are $\{12,16,19,20,22-26,29\}$.

$16$ pairs: $S_{15}$ with two $L_{16}^{33}$ that have one pair in common. The two $L_{16}^{33}$ are:
$w,0,M,s,x,H,\linebreak E,t,L,u,n,C,D,r,I,G,J,A,h,p,K, v,B,\underline{F},l,i,g,z,j,y,q,k, m,o$ and $J,0,E,H,B,m,G,I,\linebreak K,y,u,M,i,q,l,t,D,h,j,L,C,z,s,x,A,n,p, v,g,k,w,r,o$.

$12$ pairs: $S_4,hL_5^{11},hL_{16}^{33}$ and $d(S_4),hL_5^{11},hL_{16}^{33}$ ($1$ pair in common from above).

$25,26,27,28,31$ pairs: $A=A_{23}^{48},\;B=hS_{6}+2\;\textup{pairs},\;C=L_{7}^{17}$; $A,sp(B),C$ and $A,sp(B),\stackrel{\leftarrow}{C}$ ($0$ pairs from Table \ref{n=4t}, sequence 3).

$19-24,27$ pairs: $A=A_{21}^{48},\;B=S_8+1\,\textup{pair}, C=6-\textup{ext} L_9^{18}$; $A,sp(B),C$ and $\stackrel{\leftarrow}{A},sp(B),C$.

$29$ pairs: $A=A_{27}^{48},\;B=S_4,\;C=L_5^{17}$; $A,B,C$ and $A,B$ (1 pair), $C$ (1 pair).

The two $L_5^{17}$ with one pair in common are: $7,l,d,i,5,j,b,\underline{f},9,k,h,c,g,a,e\linebreak 8,6$ and $i,c,e,j,h,5,d,8,l,g,k,a,b,9,6,7$.

For \underline{$n=60$} the small cases are $\{20,23,24,26-34,37\}$.

$20$ pairs: $hS_{19}$ with two $hL_{20}^{41}$ with one pair in common. The two $hL_{20}^{41}$ are:
$S,0,W,I,V,q,E,\linebreak U,N,Y,C,J,O,P,B,o,A,X,v,K,k,L,l,T,R,G,M,s,m,Q,n,r, \underline{p},w,z,D,H,x,F,u,t,y$ and $T,0,S,X,K,U,u,R,H,v,G,W,x,I,t,w,L,A,V,Y,q \linebreak l,N,F,k,Q,s,C,J,P,B,O,y,M,o,D,E,z,r,t,m$.

$23$ pairs: $A=A_{31}^{60},\;B=L_2^7,\;C=L_9^{21},\;D=(1,1)$; $A,B,C,D$ and $\stackrel{\leftarrow}{A},B$ (1 pair), $C,D$

The two $L_2^7$ with one pair in common are:
$8,3,5,6,3,7,4,5,8,6,4,2,7,2$ and $6,8,3,7,4,3,6,\linebreak 5,4,8,7,2,5,2$.

$33$ pairs: $A=A_{27}^{60},\,B=R_9(5,16),\,C=hL_{10}^{22}$; $A,B,C$ and $\stackrel{\leftarrow}{A},B,C$

$34$ pairs: $A=A_{31}^{60},\;B=L_2^7,\;C=L_9^{21},\;D=(1,1)$; $A,B,C,D$ and $A,\stackrel{\leftarrow}{B},C$ (2 pairs), $D$.

The two $L_9^{21}$ with 2 pairs in common are:
$e,s,f,m,q,a,k,p,t,i,\underline{n},o,b,r,c,l,\underline{j}\linebreak g,d,9,h$ and $r,e,s,h,o,c,q,t,l,9,k,m,b,p,a,g,d,i,f$.

$24-32,35$ pairs: $A=A_{25}^{60},\,B=hS_{11}+1\, \textup{pair},\,C=9-\textup{ext}\,L_{12}^{23}$; $A,B,C$ and $\stackrel{\leftarrow}{A},B,C$

$37,38,39$ pairs: $A=A_{25}^{60},\;B=10-\textup{ext}\;S_{11}+1\;\textup{pair},\;C=L_{12}^{23}$; $A,B,C$ and $A,B,\stackrel{\leftarrow}{C}$ ($0,1,2$ pairs from Table \ref{n=4t}, sequence 5).

The rest of the cases are listed in Appendix A of the Supplement.

\end{document}